\documentclass[11pt]{article}
\usepackage{amsmath,amssymb,amsthm}
\usepackage{natbib}
\usepackage{graphicx}
\usepackage{xcolor}
\usepackage{pgfplots}
\usepackage{subcaption}
\usepackage{float}
\usepackage{nameref}
\usepackage{setspace}
\usepackage{bm}
\usepackage[font=small, labelfont=bf, textfont=it]{caption}
\usepackage[normalem]{ulem}
\usepackage[
    colorlinks=true,
    citecolor=blue,   
    linkcolor=blue,    
    urlcolor=blue     
]{hyperref}
\usepackage[
  a4paper,
  left=3cm,
  right=2.5cm,
  top=2.5cm,
  bottom=2.5cm
]{geometry}

\usepackage{booktabs}
\usepackage{multirow}
\usepackage{etoolbox}
\usepackage{siunitx}
\usepackage{placeins}
\usepackage{needspace}

\renewrobustcmd{\bfseries}{\fontseries{b}\selectfont}
\usepackage{bbm}
\usepackage{enumitem}

\newcommand{\bY}{\boldsymbol{Y}}
\newcommand{\by}{\boldsymbol{y}}
\newcommand{\PP}{\mathbb{P}}
\newcommand{\RR}{\mathbb{R}}
\newcommand{\EE}{\mathbb{E}}

\newcommand{\bmu}{\boldsymbol{\mu}}

\newcommand{\calN}{\mathcal{N}}

\newcommand{\bfbeta}{\boldsymbol{\beta}}

\newtheorem{theorem}{Theorem}[section]
\newtheorem{lemma}[theorem]{Lemma}
\newtheorem{proposition}[theorem]{Proposition}
\newtheorem{corollary}[theorem]{Corollary}

\theoremstyle{definition}

\theoremstyle{remark}

\theoremstyle{remark}
\newtheorem{remark}[theorem]{Remark}

\newtheoremstyle{named}%
  {}{}            
  {\itshape}      
  {}              
  {\bfseries}     
  {.}             
  {.5em}          
  {\thmname{#1}\thmnote{ \textnormal{(#3)}}} 

\theoremstyle{named}

\title{Posterior consistency of P\'olya trees for deconvolution\\ under the linear model}

\author{
Nakul Shenoy$^{*}$
\and
Asaf Weinstein$^{\dagger}$
}

\date{}

\begin{document}

\maketitle

\begingroup
\renewcommand\thefootnote{\fnsymbol{footnote}}

\footnotetext[1]{Department of Statistics, The Hebrew University of Jerusalem. 
E-mail: nakulshn@gmail.com}

\footnotetext[2]{Department of Statistics, The Hebrew University of Jerusalem.
E-mail: asaf.weinstein@mail.huji.ac.il.}
\endgroup

\begin{abstract}
Several recent works have addressed the problem of deconvolution under a linear model, where the goal is to  estimate a completely unknown $G_0$ from a vector of noisy observations $\bY = X\bfbeta + \boldsymbol{\epsilon}$, assuming the coefficients $\beta_j$ are i.i.d.~unobserved realizations from $G_0$. 
Assuming $G_0$ has a density $g_0$, we study theoretically a Bayesian nonparametric method proposed in \cite{weinstein2025nonparametric} that postulates a P\'olya tree prior $\Pi$ on $g_0$ and bases a deconvolution estimate on the posterior distribution $\Pi(\cdot|\bY)$. 
Our main result asserts that under the true model (fixed and unknown $g_0$), and under a suitable condition on the minimum eigenvalue of $X^\top X$, the posterior $\Pi(\cdot|\bY)$ concentrates around $g_0$ in sup-norm. 
The analysis presented builds on and extends results from \cite{castillo2017polya}, where posterior consistency of P\'olya trees was proved for density estimation, the simpler problem of estimating $g_0$ when observing the coefficients $\beta_j$ directly. 

\end{abstract}

\section{Introduction}
\label{sec:intro}

In statistical problems involving a vector $\bfbeta_0 = (\beta_{01},...,\beta_{0n})\in \RR^n$ of unobserved parameters of the same kind, a popular strategy for borrowing strength is to assume that the parameters are i.i.d.~(or at least exchangeable) draws from some common distribution, 
\begin{align}
\label{eq:prior-iid}
\beta_{0j} & \overset{iid}{\sim}G_0, \qquad j=1,...,n, 
\end{align}
but regard the fixed distribution $G_0$  as unknown. 
This conceptually simple idea allows one to exploit the symmetry among the parameters to incorporate regularization (shrinkage) into the given likelihood function, while permitting the {\em form} of regularization to be learned from the data rather than specified in advance. 
To emphasize, we assume throughout that \eqref{eq:prior-iid} is part of the data generating mechanism, i.e., that the true model for the data includes some fixed and unknown distribution $G_0$ from which the true parameter values $\beta_{0j}$ are drawn independently. 
Under this general {\em empirical Bayes} (EB) setup, and assuming $n$ is large, the current article is broadly concerned with  nonparametric estimation of the mixing distribution $G_0$, known as a {\em deconvolution} problem. 

In the statistical literature, deconvolution problems are classically considered  within a {\em sequence} model, where for each of the unknown parameters $\beta_{0j}\in \Theta$ we observe an independent noisy measurement from some common and known  (marginal) likelihood, 
\begin{align}
\label{eq:lik-sequence}
Y_j|\beta_{0j} &\overset{ind}{\sim} f(y_j|\beta_{0j}), \qquad j=1,...,n. 
\end{align}
Under the `separable' two-level model specified by \eqref{eq:prior-iid}-\eqref{eq:lik-sequence}, the pairs $(Y_j, \beta_{0j})$ are i.i.d., in particular  the components of $\bY = (Y_1,...,Y_n)$ are i.i.d.~copies from  
$f_{G_0}(y) := \int f(y\lvert \beta)G_0(d\beta)$, 
a `convolution' of the known likelihood $f$ with the unknown $G_0$.
\footnote{For our purposes we assume $G_0$ is identifiable, although estimating a mixing distribution can in some cases be useful even without identifiability  \citep{greenshtein2025consistent}.} 
The most classical deconvolution method in the sequence model is the nonparametric maximum likelihood estimator \citep[NPMLE,][]{kiefer1956consistency,lindsay1995mixture}, which seeks a maximizer of the (log-) likelihood  
$\ell(G) := \sum_{j=1}^n \log f_{G}(y_j)$, over all possible distributions $G$. 
The NPMLE is known to be discrete with at most $n$ support points \citep{lindsay1983geometry, lindsay1995mixture}, a property that can be leveraged to speed up computation \citep[e.g.,][]{koenker2014convex}, but on the other hand could be considered a disadvantage if $G_0$ is believed to have a density. 
Various other methods for estimating $G_0$, and approximations, both discrete and continuous, to the NPMLE under the sequence model have been proposed over the years, offering  advantages of different kinds \citep[][among others]{wang2007fast, efron2016empirical, newton2002nonparametric}. 

An alternative {\em approach} to producing a deconvolution estimator of the (fixed) distribution $G_0$ in \eqref{eq:prior-iid} is the nonparametric  Bayes approach, which posits another hierarchy in the model by placing a `prior on priors', 
\begin{align}
\label{eq:prior-hier-Bayes}
\beta_j|G & \overset{iid}{\sim}G, \qquad G\sim \Pi, 
\end{align}
where we write $\beta_j$ and $G$ to distinguish from the true coefficients  $\beta_{0j}$ and distribution $G_0$, and where \eqref{eq:lik-sequence} should now be understood as the conditional distribution of the data given $\bfbeta=\bfbeta_0$ (for an arbitrary $\bfbeta_0$). 
Above, $\Pi$ is a completely specified prior with a rich support in the space of distributions on $\Theta$, to accommodate the nonparametric nature of the problem. 
The fully Bayes model specified by  \eqref{eq:prior-hier-Bayes} and \eqref{eq:lik-sequence} automates information sharing between the samples through the calculation of posteriors (not only of $G$ but also of the parameters $\beta_j$, or the predictive posterior of a fresh sample $Y_{n+1}$),  because under \eqref{eq:prior-hier-Bayes} the pairs $(Y_j, \beta_j)$ are not independent, only conditionally independent given $G$. 
As such, one can obtain a deconvolution estimate of the original $G_0$ in \eqref{eq:prior-iid} by summarizing the posterior $\Pi(\cdot|\bY)$ of $G$.
Assuming a sequence model for the data, \citet{antoniak1974mixtures} first proposed to operationalize the nonparametric Bayes framework by taking $\Pi$ to be a mixture of Dirichlet processes;  
\citet{berry1979empirical, ferguson1983bayesian} worked out examples for specific likelihood functions, and \citet{lo1984class} treated the general case. 
Later, \citet{lavine1994more} extended this construction by showing how posterior distributions can still be calculated analytically when $\Pi$ is chosen to be a {\em P\'olya tree} (PT) prior. 
P\'olya trees form a class of prior distributions that generalizes Dirichlet processes while retaining computational tractability. 
One notable advantage of PT priors over Dirichlet processes (or mixtures thereof) is that they can be chosen so that $G$ has a density almost surely, which makes PTs more suitable for modeling continuous distributions $G_0$. 

Recently, a number of papers have appeared that move {\em beyond} the conventional sequence model \eqref{eq:lik-sequence}, and tackle the nonparametric deconvolution problem in a linear regression model, 
\begin{align}
\label{eq:lik-linear}
\bY\lvert \bfbeta_0 \sim \calN(X\bfbeta_0, \sigma^2I_m), 
\end{align}
where $X\in \RR^{m\times n}$ is a fixed and known  covariate matrix whose columns $X_j\in \RR^m$ are assumed linearly independent, and the noise term $\sigma^2>0$ can be regarded as known or unknown. 
Maintaining the assumption \eqref{eq:prior-iid} that the parameters $\beta_{0j}$, now representing the regression coefficients, are i.i.d., results as before in a two-level model with an unknown prior; however, even at the methodological (algorithmic) level, the deconvolution problem becomes substantially harder compared to the sequence case, because under \eqref{eq:lik-linear} the likelihood of each $Y_i$ depends on the entire vector $\bfbeta_0$. 
From the nonparametric empirical Bayes (EB) perspective, which regards $G_0$ as strictly fixed, the NPMLE is still a perfectly sensible estimator of $G_0$, but its implementation---to maximize the marginal log-likelihood of $\bY$ over all $G$---is considerably more difficult under \eqref{eq:lik-linear}, for example it is generally not a convex problem anymore. 
To circumvent the intractability of the NPMLE, \citet{kim2024flexible} proposed to model $G_0$ as a scale mixture of zero-mean Gaussians (more precisely, they model the rescaled coefficients $\beta_{0j}/\sigma$ as i.i.d.), and employ variational Bayes  techniques that approximate the posterior within the family of product  (posterior) distributions. 
Compared to the variational approach of \citet{carbonetto2012scalable}, this simplifies the computation while using a much more flexible family of priors. 
Essentially, \citet{kim2024flexible} show how in optimizing the variational objective their approach allows to benefit from reductions to the much simpler normal {\em sequence}  model, the special case obtained in \eqref{eq:lik-linear} when $m=n$ and $x_{ij} = 1(i=j), 1\leq i,j\leq n$. 
Encouraged by the computational advantages, \citet{mukherjee2023mean} carried out a theoretical analysis of (a variant of) the variational EB method of Kim et al.~in an asymptotic setting where $m,n\to \infty$, and under some mild assumptions established consistency of the estimator, as well as of the exact NPMLE. 
Still, because the advocated (variational) method uses the naive mean-field approximation, it can be inaccurate if covariates are strongly correlated. 
Intended for those situations, \citet{fan2023gradient} assume a bounded $G_0$ as in Mukherjee et~al., but propose a more ambitious method for approximating the NPMLE, which uses the variational representation while avoiding the mean-field approximation of the posterior. 
Their estimator employs modern techniques and is characterized by a system of gradient flow equations which are solved with an MCMC expectation-maximization approach. 
The authors prove consistency of the estimator to the NPMLE in a high dimensional setting, under conditions on $X$ that build on but relax those of Mukherjee et~al. 

A different, nonparametric Bayes approach to the deconvolution problem in the linear model (and more generally, in {generalized} linear models) was presented in \citet{weinstein2025nonparametric}, who proposed to extend the ideas of \citet{lavine1994more} from the sequence model to regression models. 
Thus, assuming the data now truly follows \eqref{eq:lik-linear} and \eqref{eq:prior-iid} for a fixed and unknown $G_0$, they replace \eqref{eq:prior-iid} with the {\em working assumption} \eqref{eq:prior-hier-Bayes}, where $\Pi$ is a (truncated) P\'olya tree prior on distributions. 
To handle the more complicated likelihood compared with the sequence model, they use an MCMC approach along with a Gibbs sampler that exploits a known  conjugacy property of PTs, specifically, that under \eqref{eq:prior-hier-Bayes} the posterior of $G$ given the {unobserved} parameters $\bfbeta$ is again a P\'olya tree. 
Weinstein et~al.~report encouraging simulation results  both in a (mixed-effects) linear regression example and in logistic regression examples, demonstrating that in terms of estimating the CDF of the true $G_0$, their hierarchical Bayes method improves significantly over different competitors, parametric and non-parametric. 

These encouraging empirical results motivate us in the current article to carry out a theoretical analysis of the method, which is not included in the original work of \citet{weinstein2025nonparametric}. 
Specifically, in a suitable high dimensional setting and assuming the pair $(\bY, \bfbeta_0)$ follows the frequentist (in terms of $G_0$) model given by \eqref{eq:lik-linear} and \eqref{eq:prior-iid}, we set out to establish {\em posterior consistency} of $G$ under the postulated PT model, i.e., that the conditional distribution $\Pi(\cdot\lvert \bY)$ of $G$ converges in a certain sense to a point mass at $G_0$. 
Our work builds on and generalizes results from  \cite{castillo2017polya}, where posterior consistency and corresponding rates for PTs are established in the {density estimation} problem, the setting in which the density of a continuous $G_0$ in \eqref{eq:prior-iid} is to be estimated when observing the coefficients $\bfbeta$ directly without noise. 
By contrast, we consider an empirical Bayes setting  where only the noisy observations $Y_i$ (and $X$) from \eqref{eq:lik-linear} are available, and $\bfbeta$ is latent. 

While posterior consistency of $G$ and rates of convergence for Bayesian nonparametric methods in empirical Bayes problems (where $\bfbeta$ is unobserved) have been studied before \citep{rousseau2024wasserstein}, previous work is, to the best of our knowledge, limited to the sequence model; 
as noted before, the regression model is considerably different and more difficult to analyze, because the likelihood of each $Y_i$ in \eqref{eq:lik-linear} involves the entire (common) vector $\bfbeta$, as opposed to the sequence model in which each $Y_i$ has its own parameter $\beta_{i}$. 
Our main theoretical result shows that, assuming $G_0$ has a density $g_0$, and in an appropriate asymptotic regime where $m,n\to\infty$ and the design matrix is sufficiently well-conditioned, the posterior for the density $g$ of $G$ under the postulated model concentrates around the true density $g_0$ in \emph{sup-norm}. 
As in \citet{castillo2017polya}, the contraction rate reflects a bias--variance tradeoff, which is accounted for by the approximation error from truncating the P\'olya tree on the one hand and the stochastic error from estimating bin probabilities from $n$ samples on the other hand. 
However, as opposed to the problem analyzed in \citet{castillo2017polya}, in our setting the admissible truncation depth, and hence the achievable resolution, is further constrained by the difficulty of recovering $\bfbeta$ from the noisy observations $\bY$ in \eqref{eq:lik-linear}. 
When this additional constraint is inactive, there is no loss relative to the idealized setting in which the coefficients are observed directly, and the rate matches that of \citet{castillo2017polya}.

The rest of the paper is organized as follows. 
In Section~\ref{sec:model} we set up the problem formally and state some basic assumptions. 
Section~\ref{sec:pt} recalls the P\'olya tree-based hierarchical model from \citet{weinstein2025nonparametric}.
In Section~\ref{sec:main-results} we prove the main result of the paper, Theorem~\ref{thm:main}, which asserts posterior concentration of the PT under the true (unknown and nonrandom) density of the coefficients. 
We supplement the main theorem with a corollary showing consistency, in squared $\ell_2$ loss, of the posterior mean estimator of the coefficient vector $\bfbeta_0$, as well as a result for random Gaussian designs, in which case we obtain a simple growth condition on $m,n$ ensuring posterior convergence.
Section~\ref{sec:proof-sketch} contains a high-level outline of the proof of the main result.

\section{Data-generating model and problem setup}
\label{sec:model}

The vector $\bY = (Y_1,...,Y_m)$ of observations is assumed to be generated from the linear model \eqref{eq:lik-linear}, where $m\ge n$, $X\in\RR^{m\times n}$ is a fixed design matrix with full column rank, and $\sigma^2>0$ is taken to be known. We allow $\sigma^2$ to depend on $(m,n)$, although this dependence is suppressed in the notation.
Additionally, the components of the unobserved coefficient vector $\bfbeta_0=(\beta_{01},\dots,\beta_{0n})\in\RR^n$ are assumed to be i.i.d.~draws as specified in \eqref{eq:prior-iid}, where $G_0$ is fixed and unknown. 
Throughout, we assume that $G_0$ admits a density $g_0$ supported on a known compact interval $[a,b]\subset\RR$. We assume that $g_0$ is $\alpha$-H\"older smooth for some $\alpha\in(0,1]$, and is bounded above and below away from zero, i.e., that there exist constants $L_0<\infty$ and $0<m_0\le M_0<\infty$ such that
\[
m_0 \le g_0(x)\le M_0,
\qquad
|g_0(x)-g_0(y)|\le L_0 |x-y|^\alpha,
\qquad x,y\in[a,b].
\]
For every $g_0$, the model given by \eqref{eq:prior-iid} and \eqref{eq:lik-linear} defines a joint distribution on $(\bfbeta_0,\bY)$, which we denote by $\PP_{g_0}$. This is the {\em true} data-generating law.
Expectations under $\PP_{g_0}$ are correspondingly denoted by $\EE_{g_0}$.

Following \citet{weinstein2025nonparametric}, to draw inference on $g_0$, we postulate the hierarchical Bayes model given by \eqref{eq:prior-hier-Bayes} and \eqref{eq:lik-linear}, specifying a truncated P\'olya tree prior of depth $L$ on the unknown mixing density $g$. 
For each fixed $\by\in \RR^m$, we will use $\Pi_L(\cdot\mid \by)$ to denote the posterior of $g$ under this postulated model, and $\Pi_L(\cdot\mid \bY)$ for the corresponding random object obtained by plugging $\by = \bY$.
Our main objective in this article is to show that, under suitable conditions on the design matrix $X$, the resulting posterior concentrates around $g_0$ in sup-norm as $m,n\to\infty$. 
Throughout, posterior contraction is always understood with respect to the true law. 
For example, a statement of the form
\begin{equation}
\label{eq:post-contraction}
\EE_{g_0}\!\left[\Pi_L\!\left(\|g-g_0\|_\infty>C_0\varepsilon_n \mid \bY\right)\right]\to 0    
\end{equation}
means that the posterior arising from the postulated model concentrates near the true density $g_0$, when expectation over $\bY$ is taken under $\PP_{g_0}$. 

\section{A hierarchical Bayes approach}
\label{sec:pt}

We recall the truncated P\'olya tree prior from \cite{weinstein2025nonparametric} and the resulting hierarchical Bayes model. 

\subsection{The level-$L$ truncated P\'olya tree prior}

Let $L = L(m,n) \in \mathbb{N}$ denote the truncation level (or {\em depth}) of the tree, to be specified later as a function of $(m,n)$.

\bigskip
\noindent \textbf{Dyadic partition of the support}. 
For each level $l\ge 0$, partition $[a,b]$ into $2^l$ dyadic subintervals
\[
I_k^l :=
\begin{cases}
\left[a+(b-a)\frac{k}{2^l},\, a+(b-a)\frac{k+1}{2^l}\right),
& k=0,\dots,2^l-2,\\[0.4em]
\left[b-\frac{b-a}{2^l},\, b\right],
& k=2^l-1.
\end{cases}
\]
Thus $\{I_k^l:0\le k<2^l\}$ is a partition of $[a,b]$ for every $l$.

\bigskip
\noindent \textbf{Construction of the truncated P\'olya tree prior}. 
For each node $(l,k)$, with $l=0,\dots,L-1$ and $k=0,\dots,2^l-1$, let
\[
V_{l,k}\overset{ind}{\sim}\mathrm{Beta}(1,1)
\]
denote the random split proportion assigned to the left child. These variables determine a random probability distribution $G$ on the level-$L$ partition recursively as follows. 
Starting from $G(I_0^0)=1$, define for each $l=0,\dots,L-1$ and $k=0,\dots,2^l-1$,
\[
G(I_{2k}^{\,l+1}) = V_{l,k}\,G(I_k^l),
\qquad
G(I_{2k+1}^{\,l+1}) = (1-V_{l,k})\,G(I_k^l).
\]
In other words, the mass assigned to a terminal cell $I_k^L$ is the product of the split proportions along the unique path from the root to that cell.
The truncated P\'olya tree prior then identifies $G$ with the piecewise-constant density
\[
g(x)=\sum_{k=0}^{2^L-1}\frac{2^L}{b-a}\,G(I_k^L)\,\mathbbm{1}\{x\in I_k^L\},
\qquad x\in[a,b].
\]
We denote the resulting prior on densities $g$ by $\Pi_L$.

\subsection{Hierarchical Bayes model}

Incorporating the level-$L$ truncated PT prior $\Pi_L$ on $g$ leads to the following hierarchical Bayes model, to which we refer as the {\em working model}:
\begin{align}
&g \sim \Pi_L, \label{eq:pt-hier-1}\\
&\beta_1,...,\beta_n \mid g \overset{iid}{\sim} g
\label{eq:pt-hier-2}\\
&\bY\mid \bfbeta \sim \calN(X\bfbeta,\sigma^2 I_m), 
\label{eq:pt-hier-3}
\end{align}
where we used $\beta_j$ here instead of $\beta_{0j}$ to distinguish the coefficients from those of the true model. 
We rely on the posterior distribution of $g$ given $\bY$ under the working model to estimate the true density $g_0$.

A key feature of the P\'olya tree construction is that the posterior distribution of $g$ given $\bfbeta$ remains a truncated P\'olya tree \citep{ferguson1974prior, mauldin1992polya}. To state this formally, define the empirical bin counts,
\[
N_{\bfbeta}(I_k^l):=\sum_{j=1}^n \mathbbm{1}\{\beta_j\in I_k^l\},
\qquad l=0,\dots,L,\quad k=0,\dots,2^l-1,
\]
and the left/right child counts,
\[
N_{\bfbeta}^{(0)}(I_k^l):=N_{\bfbeta}(I_{2k}^{l+1}),
\qquad
N_{\bfbeta}^{(1)}(I_k^l):=N_{\bfbeta}(I_{2k+1}^{l+1}),
\qquad l=0,\dots,L-1,\quad k=0,\dots,2^l-1.
\]
Then, under the $\mathrm{Beta}(1,1)$ prior, we have 
\[
V_{l,k}\mid \bfbeta \sim \mathrm{Beta}\big(1+N_{\bfbeta}^{(0)}(I_k^l),\,1+N_{\bfbeta}^{(1)}(I_k^l)\big),
\]
independently across nodes $(l,k)$. 
Equivalently, the conditional distribution $\Pi_L(\cdot\mid \bfbeta)$ is again a truncated P\'olya tree, with updated Beta parameters determined by the bin counts of $\bfbeta$.

\subsection{Posterior distribution}
Since under the working model $g$ and $\bY$ are independent given $\bfbeta$, the posterior distribution of $g$ can be written as the mixture
\[
\Pi_L(A\mid \by)
=
\int \Pi_L(A\mid \bfbeta)\,\Pi_{L,\beta}(d\bfbeta\mid \by),
\]
for every measurable set $A$ of densities supported on $[a,b]$. Here, $\Pi_{L,\beta}(\cdot \mid \by)$ denotes the posterior distribution of $\bfbeta$ under \eqref{eq:pt-hier-1}-\eqref{eq:pt-hier-3} (analogously, $\Pi_{L,\beta}$ denotes its prior distribution).

Although the posteriors $\Pi_L(\cdot\mid \by)$ and $\Pi_{L,\beta}(\cdot\mid \by)$ of $g$ and $\bfbeta$ are not available in closed form, posterior inference can be carried out via a Gibbs sampling algorithm, alternating between updates of $\bfbeta$ and $g$. 
Conveniently, the update of $g$ is direct, since $\Pi_L(\cdot |\bfbeta)$ is again a truncated P\'olya tree with Beta parameters updated by the empirical bin counts.

\section{Main results}
\label{sec:main-results}

We now state our main posterior contraction result for the truncated P\'olya tree procedure introduced in the previous section.
Our analysis builds on results of \citet{castillo2017polya}, where posterior contraction for P\'olya tree priors is studied under the \emph{density estimation} setting, i.e., in the case where one directly observes the i.i.d.~samples $\beta_{0j}$ from $g_0$ (in that case, of course, the likelihood of $\bY\lvert \bfbeta$ is irrelevant). 
We therefore begin by recalling the corresponding direct-observation benchmark, adapted to our notation and assumptions. 
Then, we state our main theorem for the regression model, which extends this benchmark to the harder case where the coefficients are latent and only the noisy linear observation $\bY\sim \calN(X\bfbeta_0, \sigma^2I_m)$ is available.

\smallskip
For a truncation level $L\in\mathbb{N}$, define
\begin{equation}
\label{eq:epsilon-def}
\varepsilon_n(L)
:=
(b-a)^{\alpha}2^{-\alpha L}
+
(b-a)^{-1}\sqrt{\frac{L\,2^L}{n}}.
\end{equation}
The first term is the approximation error from representing the density on the level-$L$ dyadic partition (`quantization'), and the second is the stochastic error from estimating the corresponding bin probabilities from $n$ samples. For each fixed truncation level $L$, the quantity $\varepsilon_n(L)$ is the level-$L$ bias--variance scale governing the sup-norm accuracy of a truncated P\'olya tree posterior.

The role of the theory below is to determine which truncation levels are admissible under the two models for the data. In the `noiseless' setting of \citet{castillo2017polya} the coefficients are observed directly, and truncation levels up to $L_{\mathrm{Cast}}$, defined below, are admissible. In our regression setting, by contrast, the empirical bin counts are not observed and must instead be inferred from the noisy linear observation $\bY$. This creates an additional admissibility constraint: the dyadic partition cannot be finer than the resolution at which the latent coefficients, and hence their bin counts, are recoverable from the regression model. The resulting contraction rate is obtained by evaluating $\varepsilon_n(L)$ at the largest truncation level satisfying both the direct-observation admissibility constraint and this additional regression-induced constraint. When the latter constraint is inactive, the rate reduces to the direct-observation rate of \citet{castillo2017polya}.

\subsection{Review: posterior contraction from direct observations}

We first record the direct-observation benchmark corresponding to \citet{castillo2017polya}. Since here $\bfbeta_0=(\beta_{01},\dots,\beta_{0n})$ is observed, the problem reduces to density estimation from i.i.d.~samples from $g_0$. The theorem below adapts Castillo's posterior contraction result for P\'olya trees to our notation and assumptions, including compact support $[a,b]$ and the truncated P\'olya tree prior described in Section~\ref{sec:pt}.

Let
\[
L_{\mathrm{Cast}}
:=
\left\lfloor
\log_2\!\left(
c_{\mathrm{Cast}}\left(\frac{n}{\log n}\right)^{\frac{1}{2\alpha+1}}
\right)
\right\rfloor
\]
for a sufficiently small constant $c_{\mathrm{Cast}}>0$. This is the largest truncation level allowed by the direct-observation argument.
For $L\le L_{\mathrm{Cast}}$, let $\Pi_L(\cdot\mid \bfbeta_0)$ denote the posterior distribution on $g$ under the level-$L$ truncated P\'olya tree prior given the direct observations $\bfbeta_0$.

Define
\[
\Lambda_n(l):=\sqrt{(l+L)\,n\,2^{-l}},
\qquad
P_0(I_k^l):=\int_{I_k^l} g_0(x)\,dx.
\]
For $M>0$, let $\mathcal{B}=\mathcal{B}(M,L)$ be the set of all $\bfbeta\in [a,b]^n$ such that
\[
\big|N_{\bfbeta}(I_k^l)-nP_0(I_k^l)\big|
\le
M \{\Lambda_n(l)\vee(l+L)\}
\]
simultaneously for all $0\le l\le L$ and $0\le k<2^l$. 
Thus, $\mathcal{B}$ is the event that the empirical bin counts are uniformly close to their expectations under $g_0$. 
With this notation, the direct-observation benchmark can be stated as follows. 

\begin{theorem}[adapted from \citet{castillo2017polya}]
\label{thm:beta-observed}
Assume the true model \eqref{eq:prior-iid}, with $g_0$ satisfying the conditions of Section~\ref{sec:model}. For every sufficiently large $M>0$, there exist constants $C_0,C_1,c_0>0$ such that for any sequence $L\le L_{\mathrm{Cast}}$,
\[
\sup_{\bfbeta\in \mathcal{B}(M,L)}
\Pi_L\!\left(
\|g-g_0\|_\infty > C_0\,\varepsilon_n(L)
\;\middle|\;
\bfbeta
\right)
\le C_1e^{-c_0L},
\]
and
\[
\PP_{g_0}\!\left(\bfbeta_0\in \mathcal{B}(M,L)\right)\ge 1-C_1e^{-c_0L}.
\]
Consequently, if in addition $L \to \infty$, then $\varepsilon_n(L)\to0$ and
\[
\EE_{g_0}\!\left[
\Pi_L\!\left(
\|g-g_0\|_\infty > C_0\,\varepsilon_n(L)
\;\middle|\;
\bfbeta_0
\right)
\right]
\longrightarrow 0
\qquad\text{as } n\to\infty.
\]
\end{theorem}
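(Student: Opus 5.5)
The proof has three parts: a probability bound for the good event $\mathcal{B}(M,L)$, a deterministic posterior bound on $\mathcal{B}(M,L)$ following \citet{castillo2017polya}, and a short combination step.

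\textbf{Probability of $\mathcal{B}(M,L)$.} For fixed $(l,k)$, the count $N_{\bfbeta_0}(I_k^l)$ is Binomial$(n,P_0(I_k^l))$. Since $m_0\le g_0\le M_0$, we have $P_0(I_k^l)\asymp (b-a)2^{-l}$, so the variance is at most $M_0(b-a)n2^{-l}$. Bernstein's inequality with deviation $t=M\{\Lambda_n(l)\vee(l+L)\}$ gives a tail bound $2\exp\{-ct^2/(n2^{-l}+t)\}$. In both the variance-dominated and range-dominated regimes the exponent is at least $c'M(l+L)$. A union bound over the $2^l$ cells at level $l$ and the levels $l\le L$ then gives
\[
\PP_{g_0}\bigl(\bfbeta_0\notin\mathcal{B}\bigr)\le \sum_{l=0}^L 2^{l+1}e^{-c'M(l+L)}\le C_1e^{-c_0L}
\]
once $M$ is large.

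\textbf{Posterior bound on $\mathcal{B}$.} Fix $\bfbeta\in\mathcal{B}$. Under $\Pi_L(\cdot\mid\bfbeta)$, $g$ is piecewise constant on $\{I_k^L\}$ with value $\frac{2^L}{b-a}\prod_{l<L}W_l$. Each $W_l$ is either $V_{l,k}$ or $1-V_{l,k}$ and is Beta-distributed with parameters $1+N^{(0)},1+N^{(1)}$. I split
\[
\|g-g_0\|_\infty\le\|g-g_{0,L}\|_\infty+\|g_{0,L}-g_0\|_\infty ,
\]
where $g_{0,L}$ is the histogram projection of $g_0$ at level $L$. Hölder smoothness gives $\|g_{0,L}-g_0\|_\infty\le L_0(b-a)^\alpha2^{-\alpha L}$, which is the bias term of $\varepsilon_n(L)$.

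For the first term, write $\log$ of the ratio $g(x)/g_{0,L}(x)$ as a telescoping sum over $l$ of $\log\bigl(W_l/\bar p_l\bigr)$, where $\bar p_l$ is the true conditional split probability. Each $W_l$ concentrates around its mean $(1+N^{(0)})/(2+N)$. That mean differs from $\bar p_l$ by $O\bigl(\{\Lambda_n(l)\vee(l+L)\}/(n2^{-l})\bigr)$ on $\mathcal{B}$. Beta concentration, via a sub-Gaussian or Bernstein bound for the Beta distribution, gives a fluctuation of order $\sqrt{(l+L)/(n2^{-l})}$ with probability at least $1-e^{-c(l+L)}$. A union bound over the $2^{L+1}$ nodes then gives a total failure probability $\lesssim e^{-c_0L}$. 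Summing over $l\le L$ is dominated by the deepest levels and gives
\[
\sum_{l\le L}\sqrt{(l+L)2^l/n}\lesssim\sqrt{L2^L/n}.
\]
The smallness condition $L\le L_{\mathrm{Cast}}$ guarantees that $n2^{-L}\gg L$. Hence all relative errors are small, the logarithms can be linearized, and multiplying by $\|g_{0,L}\|_\infty\le M_0$ yields the stochastic term of $\varepsilon_n(L)$, up to the stated $(b-a)$ factors.

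\textbf{Combination.} Split the expectation over $\mathcal{B}$ and its complement:
\[
\EE_{g_0}\bigl[\Pi_L(\cdot\mid\bfbeta_0)\bigr]\le \sup_{\mathcal{B}}\Pi_L(\cdot\mid\bfbeta)+\PP_{g_0}(\mathcal{B}^c)\le 2C_1e^{-c_0L}\to0 .
\]
Also $\varepsilon_n(L)\to0$, because $L\to\infty$ and $L2^L/n\lesssim (n/\log n)^{-2\alpha/(2\alpha+1)}\log n\to0$. The main obstacle is the multiplicative telescoping control of the Beta fluctuations uniformly across all $2^L$ paths. This requires careful Beta tail bounds that exploit the $(l+L)$ weighting to afford the union bound, and it is where I would follow Castillo's argument most closely.
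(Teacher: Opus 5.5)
Your proposal is correct. The bound on $\PP_{g_0}(\bfbeta_0\notin\mathcal{B}(M,L))$ via Bernstein and a union bound, and the final split of the expectation over $\mathcal{B}$ and its complement, are the same as in the paper. The posterior bound on $\mathcal{B}(M,L)$ takes a genuinely different route.

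The paper follows Castillo's Haar-basis argument. It writes $\|g-g_0\|_\infty\le\|g-\bar g\|_\infty+\|\bar g-g_0^{(L)}\|_\infty+\|g_0^{(L^c)}\|_\infty$ and handles each term separately:
\begin{itemize}
\item the bias, through the H\"older decay of the Haar coefficients $g_{0,lk}$;
\item the centering term, through separate lemmas on posterior-mean mass ratios and split ratios, again using the decay of $|g_{0,lk}|$;
\item the fluctuation term, on a Beta-concentration event $\mathcal{A}$, by expanding $g_{lk}-\bar g_{lk}$ in split deviations and random mass ratios along paths.
\end{itemize}

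You instead compare $g$ directly with the level-$L$ histogram $g_{0,L}$, which coincides with $g_0^{(L)}$. Both functions are constant on terminal cells, so on $I_k^L$ the ratio $g/g_{0,L}$ is exactly $\prod_{l<L}W_l/\bar p_l$. A single node-wise event controlling $|W_l-\bar p_l|$ then gives a relative error $\lesssim\sqrt{L2^L/n}$ on every cell at once. Here $|W_l-\bar p_l|$ is the posterior-mean bias, which your $O(R_l/(n2^{-l}))$ bound already covers including the Beta$(1,1)$ pseudo-counts, plus the Beta fluctuation. The bound $g_0\le M_0$ turns this into the stochastic term. H\"older smoothness enters only through the elementary averaging bound $\|g_{0,L}-g_0\|_\infty\le L_0(b-a)^\alpha 2^{-\alpha L}$.

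What each route buys:
\begin{itemize}
\item Your route is shorter. It shows that, for the truncated Beta$(1,1)$ tree, neither the Haar machinery nor the H\"older control of $g_{0,lk}$ in the centering term is needed for this rate.
\item The paper's route separates the posterior mean from the posterior spread. It is the version that extends to untruncated trees with level-dependent Beta parameters, where the prior's regularization interacts with the Haar coefficients of $g_0$.
\end{itemize}

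One small point: the uniformity over all $2^L$ paths, which you flag as the main obstacle, is automatic. Once the event holds at every node, every root-to-leaf product is controlled, because each path is built from those nodes.
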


\subsection{Posterior contraction from noisy linear observations}

We now turn to the main technical contribution, which can be viewed as an extension of Castillo's result from the directly-observed coefficients model to the noisy linear model. Relative to the direct-observation benchmark, the new element is an additional restriction on the truncation level, reflecting the accuracy with which the latent coefficients can be recovered from $\bY$. We encode this restriction below through $L_{\mathrm{noise}}$.

\smallskip
Let $\lambda_{\min}(X^\top X)$ denote the smallest eigenvalue of $X^\top X$. For a sufficiently small constant $c_{\mathrm{noise}}>0$, define
\[
L_{\mathrm{noise}}
:=
\max\left\{
\ell\in\mathbb N:
\frac{2^{3\ell/2}}{\ell^{3/2}}
\le
c_{\mathrm{noise}}\,\frac{\lambda_{\min}(X^\top X)}
{\sigma^2 n^{3/2}}
\right\}.
\] 
We choose the truncation level as
\begin{equation}
\label{eq:Lmn-def}
L:=\min\{L_{\mathrm{Cast}},\,L_{\mathrm{noise}}\}.
\end{equation}
Thus $L_{\mathrm{Cast}}$ is the upper bound from \citet{castillo2017polya}, while $L_{\mathrm{noise}}$ is the additional resolution limit due to observing $\bY$ instead of $\bfbeta_0$. 
We can now state the main result of the paper.

\begin{theorem}[Posterior contraction]
\label{thm:main}
Assume the true model \eqref{eq:prior-iid} and \eqref{eq:lik-linear}, with $g_0$ satisfying the conditions of Section~\ref{sec:model}. 
Let $L$ be given by \eqref{eq:Lmn-def}, and suppose that $X$ is such that
\[
L\to\infty
\qquad\text{as } m,n\to\infty.
\]
Then $\varepsilon_n(L)\to0$, and there exists a constant $C_0>0$ such that
\[
\EE_{g_0}\!\left[
\Pi_L\!\left(
\|g-g_0\|_\infty > C_0\,\varepsilon_n(L)
\;\middle|\;
\bY
\right)
\right]
\longrightarrow 0
\qquad\text{as } m,n\to\infty.
\]
\end{theorem}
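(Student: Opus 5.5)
Since $\Pi_L(A\mid\bY)=\int \Pi_L(A\mid\bfbeta)\,\Pi_{L,\beta}(d\bfbeta\mid\bY)$, I reduce Theorem~\ref{thm:main} to Theorem~\ref{thm:beta-observed}. Write $A=\{\|g-g_0\|_\infty>C_0\varepsilon_n(L)\}$ and let $\mathcal B'=\mathcal B(2M,L)$ be an enlarged good set. Then
\[
\Pi_L(A\mid\bY)\le \sup_{\bfbeta\in\mathcal B'}\Pi_L(A\mid\bfbeta)+\Pi_{L,\beta}\big(\bfbeta\notin\mathcal B'\mid\bY\big).
\]
The first term is at most $C_1e^{-c_0L}\to0$ by Theorem~\ref{thm:beta-observed}, applied with $2M$ in place of $M$ and using $L\le L_{\mathrm{Cast}}$. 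It therefore suffices to show $\EE_{g_0}\Pi_{L,\beta}(\mathcal B'^c\mid\bY)\to0$. In words: the latent-coefficient posterior must put almost all its mass on vectors whose dyadic bin counts at every level $l\le L$ are within $2M\{\Lambda_n(l)\vee(l+L)\}$ of $nP_0(I_k^l)$. We already know from Theorem~\ref{thm:beta-observed} that $\bfbeta_0\in\mathcal B(M,L)$ with probability $1-C_1e^{-c_0L}$. The problem thus becomes showing that posterior draws $\bfbeta$ have bin counts within $M\{\Lambda_n(l)\vee(l+L)\}$ of those of $\bfbeta_0$.

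\textbf{Step 1: posterior localisation of $\bfbeta$ in $\ell_2$.} Under the working model the posterior density of $\bfbeta$ is proportional to $\exp(-\|\bY-X\bfbeta\|^2/2\sigma^2)\,m_L(\bfbeta)$. Here $m_L(\bfbeta)=\int\prod_j g(\beta_j)\,\Pi_L(dg)$ is the marginal prior, which is supported on $[a,b]^n$. I compare the likelihood ratio against $\bfbeta_0$ using $\|X(\bfbeta-\bfbeta_0)\|^2\ge\lambda_{\min}\|\bfbeta-\bfbeta_0\|^2$. The prior side needs a lower bound on $m_L$ near $\bfbeta_0$ and an upper bound everywhere. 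Being a P\'olya-urn type quantity on $2^L$ bins, it satisfies $\sup m_L/m_L(\bfbeta_0)\le \exp(O(n\,L))$ crudely, or better $\exp(O(2^L\log n))$ via Beta-function bounds. The plan is to show that $\Pi_{L,\beta}(\|\bfbeta-\bfbeta_0\|^2>r^2\mid\bY)\to0$ in $\PP_{g_0}$-probability, with $r^2\asymp \sigma^2(n+\text{prior-ratio log})/\lambda_{\min}$, using a Gaussian chi-square bound on the noise cross term $\langle\boldsymbol\epsilon,X(\bfbeta-\bfbeta_0)\rangle$. Prior mass of an $\ell_2$ ball of radius $\delta$ around $\bfbeta_0$ must also be lower bounded. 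This is the delicate counting piece: volume $\approx(\delta/\sqrt n)^n$ times a density ratio.

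\textbf{Step 2: from $\ell_2$ closeness to bin-count closeness.} If $\|\bfbeta-\bfbeta_0\|^2\le r^2$, the number of coordinates with $|\beta_j-\beta_{0j}|>\eta$ is at most $r^2/\eta^2$. A coordinate with a smaller move changes bin at level $l$ only if $\beta_{0j}$ lies within $\eta$ of a boundary of $I^l_k$. By the bound $g_0\le M_0$ and a Bernstein/union bound over the $2^l$ cells and $l\le L$, the number of such $\beta_{0j}$ near any single cell boundary is $O(n\eta+\sqrt{n\eta L}+L)$ with probability $1-e^{-cL}$. I choose $\eta$ to balance $r^2/\eta^2$ against $n\eta$, and require the total to be $\le M\Lambda_n(L)\asymp\sqrt{Ln2^{-L}}$, the finest-level (binding) tolerance. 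This gives $\eta\lesssim\sqrt{L2^{-L}/n}$ and $r^2\lesssim \eta^2\sqrt{nL2^{-L}}$, i.e.\ $r^2\lesssim (L2^{-L})^{3/2}/\sqrt n$. Combined with $r^2\asymp\sigma^2 n/\lambda_{\min}$, this is exactly the condition $2^{3L/2}/L^{3/2}\le c_{\mathrm{noise}}\lambda_{\min}/(\sigma^2n^{3/2})$ defining $L_{\mathrm{noise}}$. That match confirms the intended route. Coarser levels have larger tolerance, so they follow from the same estimate.

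\textbf{Main obstacle.} Step 1 is the crux. The working prior on $\bfbeta$ is not i.i.d.\ $g_0$ but an exchangeable P\'olya-tree marginal, and the posterior must be localised at radius $r^2\asymp\sigma^2n/\lambda_{\min}$ without losing more than constants to the prior ratio. I would first try a direct argument. Since $\bfbeta$ lives in the compact set $[a,b]^n$ and $m_L$ is piecewise constant on the $2^{Ln}$ dyadic cells with bounded log-ratios between neighbouring cells, a Gaussian-tail computation on $\|X(\bfbeta-\bfbeta_0)\|$ conditional on $\bfbeta_0$ should give the result. If the prior ratio is too large, I would absorb it by conditioning on $g$: for each fixed $g$ in the tree the prior on $\bfbeta$ is a product, and a uniform-in-$g$ localisation bound then integrates over $g$.
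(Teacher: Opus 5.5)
\textbf{There is a genuine gap in your Step~1.} Your reduction (mixture representation, an enlarged good set, $\bfbeta_0\in\mathcal B(M,L)$ with high probability) matches the paper's Appendix~\ref{app:proof-main}, B.1--B.5. So does your Step~2: at most $r^2/\eta^2$ large moves, Bernstein control of boundary strips, and the threshold $r^2\lesssim (L2^{-L})^{3/2}/\sqrt n$. Step~1, however, is only a plan, and the plan cannot produce the radius you need. Three things go wrong.

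\emph{The prior-ratio bound is false.} Your claim $\sup_{\bfbeta}m_L(\bfbeta)/m_L(\bfbeta_0)\le\exp\{O(2^L\log n)\}$ fails: placing all coordinates in one terminal cell gives $m_L=B^{-n}2^{nL}(n+1)^{-L}$, so for typical $\bfbeta_0$ this ratio is $e^{\Theta(nL)}$. What is true, and what is actually needed, is the uniform \emph{lower} bound $m_L(\bfbeta)\ge B^{-n}(n+1)^{-2^L}$. It follows from $2^s\mathrm B(u+1,v+1)=2^s/\{(s+1)\binom{s}{u}\}\ge 1/(s+1)$.

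\emph{The small-ball route loses a log factor, which is fatal.} Use $\int m_L=1$ in the numerator and this lower bound in the denominator. Lower-bounding the evidence via prior mass of a neighbourhood $\{\|X(\bfbeta-\bfbeta_0)\|_2\lesssim\sigma\sqrt n\}$ then charges the Lebesgue volume of that neighbourhood. That adds $\tfrac{1}{2}\log\det(X^\top X/\sigma^2)+O(n)\ge\tfrac{n}{2}\log(\lambda_{\min}(X^\top X)/\sigma^2)+O(n)$ to the exponent. With your $\ell_2$ ball of radius $\delta\le\sigma\sqrt{n/\lambda_{\max}(X^\top X)}$ it is $\tfrac{n}{2}\log(\lambda_{\max}(X^\top X)/\sigma^2)$ instead. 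Under the theorem's hypotheses $\lambda_{\min}(X^\top X)/\sigma^2\gg n^{3/2}$, so this extra cost is at least of order $n\log n$, and it is unbounded if the spectrum of $X^\top X$ is spread out. The resulting radius $r^2\gtrsim\sigma^2 n\log n/\lambda_{\min}(X^\top X)$ exceeds your Step~2 threshold by a factor of order $\log n$ whenever $L=L_{\mathrm{noise}}$. So the match with $L_{\mathrm{noise}}$ that you take as confirmation does not actually follow.

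\emph{The fallback also fails.} Localising uniformly in $g$ cannot work, because a P\'olya tree draw can put arbitrarily small mass on bins containing many $\beta_{0j}$.

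\textbf{The missing idea is the paper's change-of-measure argument} (Proposition~\ref{prop:beta-l2-contract}), which avoids any small-ball estimate. Averaging over $\bfbeta_0$ gives $\EE_{g_0}T(\bfbeta_0,\bY)=\EE_L[(q_0/\pi_{L,\beta})(\widetilde{\bfbeta})\,T(\widetilde{\bfbeta},\widetilde{\bY})]$. Under the working law, the truth $\widetilde{\bfbeta}$ and an independent posterior draw $\bfbeta^\star$ are exchangeable given $\widetilde{\bY}$. Passing through the OLS estimator then yields $\PP_L(\|X(\bfbeta^\star-\widetilde{\bfbeta})\|_2>\sigma t)\le 2\PP(\chi^2_n>t^2/4)$, which involves neither the prior nor $\det(X^\top X)$. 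Returning to $\PP_{g_0}$ costs only $\sup q_0/\pi_{L,\beta}\le (BM_0)^n(n+1)^{2^L}=e^{O(s_{n,L})}$, where $s_{n,L}=n+2^L\log(n+1)$. The $\chi^2_n$ tail at $t^2\asymp s_{n,L}$ absorbs this cost, and $\lambda_{\min}(X^\top X)$ enters only when converting the prediction norm to $\ell_2$. The result is the log-free radius $A\sigma\sqrt{s_{n,L}/\lambda_{\min}(X^\top X)}$. Since $2^L\log n=O(n)$ for $L\le L_{\mathrm{Cast}}$, this radius is exactly what makes $L\le L_{\mathrm{noise}}$ sufficient in your Step~2.
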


Thus the contraction rate has the same functional form as in the direct-observation problem, but evaluated at the smaller admissible depth $L=\min\{L_{\mathrm{Cast}},\,L_{\mathrm{noise}}\}$. When $L_{\mathrm{noise}} \ge L_{\mathrm{Cast}}$, the regression problem incurs no additional loss and the Castillo direct-observation rate is recovered.
\paragraph{Posterior mean estimation of the coefficients.}
The preceding theorem concerns recovery of the mixing density $g_0$. As part of its proof, we show and use an auxiliary contraction bound for the latent coefficient vector $\bfbeta$, stated as Proposition~\ref{prop:beta-l2-contract} in Appendix~\ref{app:beta-contraction}. 
Thus, $\bfbeta$ contraction is not derived from the density contraction statement; instead, it relies on the given likelihood function, and is one of the elements used to prove posterior convergence of $\Pi_L$.
In addition to its use in the proof of Theorem~\ref{thm:main}, Proposition~\ref{prop:beta-l2-contract} also yields the following consequence for the Bayes estimator, under squared loss, of the coefficient vector $\bfbeta_0$.
Let
\[
\widehat{\bfbeta}(\bY)
:=
\int \bfbeta\,\Pi_{L,\beta}(d\bfbeta\mid \bY)
\]
denote the posterior mean of the coefficient vector under the working P\'olya tree model.
\begin{corollary}[$\ell_2$ error of the posterior mean]
\label{cor:beta-posterior-mean-l2}
Assume the true model \eqref{eq:prior-iid} and \eqref{eq:lik-linear}, with $g_0$ supported on $[a,b]$ and satisfying $g_0\le M_0$.
Let $L=L(m,n)$ be any sequence of truncation levels, not necessarily the truncation level used in Theorem~\ref{thm:main}.
Suppose $X$ has full column rank and that
\[
\frac{\sigma^2\{n+2^L\log(n+1)\}}{\lambda_{\min}(X^\top X)}
\longrightarrow 0.
\]
Then
\[
\EE_{g_0}\left[
\|\widehat{\bfbeta}(\bY)-\bfbeta_0\|_2^2
\right]
\longrightarrow 0.
\]
\end{corollary}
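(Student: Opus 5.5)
We will prove Corollary~\ref{cor:beta-posterior-mean-l2} by combining Jensen's inequality with a direct bound on the posterior second moment $\int\|\bfbeta-\bfbeta_0\|_2^2\,\Pi_{L,\beta}(d\bfbeta\mid\bY)$. Proposition~\ref{prop:beta-l2-contract} is presumably a bound of exactly this type, but we sketch a self-contained route. Convexity gives
\[
\|\widehat{\bfbeta}(\bY)-\bfbeta_0\|_2^2\le \int\|\bfbeta-\bfbeta_0\|_2^2\,\Pi_{L,\beta}(d\bfbeta\mid \bY),
\]
so it suffices to show that the expectation under $\PP_{g_0}$ of the right-hand side tends to $0$.

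\textbf{Step 1: reduce to a prediction-loss bound.} For every $\bfbeta\in\RR^n$ we have $\|X(\bfbeta-\bfbeta_0)\|_2^2\ge\lambda_{\min}(X^\top X)\|\bfbeta-\bfbeta_0\|_2^2$. It therefore suffices to show
\[
\EE_{g_0}\int\|X(\bfbeta-\bfbeta_0)\|_2^2\,\Pi_{L,\beta}(d\bfbeta\mid\bY)\lesssim\sigma^2\{n+2^L\log(n+1)\},
\]
after which we divide by $\lambda_{\min}(X^\top X)$ and apply the assumption.

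\textbf{Step 2: establish the prediction-loss bound.} The marginal prior $\Pi_{L,\beta}$ is exchangeable. Conditionally on $g$, it is i.i.d.\ from a piecewise-constant density on $2^L$ bins. We use the standard Bayesian ``oracle'' inequality for Gaussian regression: the posterior expected prediction risk is controlled by $\sigma^2$ times the sum of the dimension (at most $n$) and $-\log$ of the prior mass of a neighborhood of $\bfbeta_0$. A concrete route is the following. Write the posterior density as proportional to $\exp(-\|\bY-X\bfbeta\|^2/2\sigma^2)$ times the prior, and use the identity $\|\bY-X\bfbeta\|^2-\|\bY-X\bfbeta_0\|^2=\|X(\bfbeta-\bfbeta_0)\|^2-2\langle\boldsymbol\epsilon,X(\bfbeta-\bfbeta_0)\rangle$. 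Then bound the evidence from below by restricting to the set $U=\{\bfbeta:\|X(\bfbeta-\bfbeta_0)\|^2\le\sigma^2 n\}$. This reduces the problem to controlling $-\log\Pi_{L,\beta}(U)$.

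\textbf{Step 3: lower-bound the prior mass of $U$ (main obstacle).} Under the PT prior, conditionally on the bin labels, each $\beta_j$ is uniform within its bin. The probability that the labels match those of $\bfbeta_0$ is a Dirichlet--multinomial probability, namely $\prod_k N_k!/[(n+2^L-1)!/(2^L-1)!]$ in the symmetric case. We bound it below by $\exp(-C\,2^L\log(n+1))$ times the multinomial type-class factor. This is the source of the $2^L\log(n+1)$ term. The remaining factor, the probability of choosing the right arrangement within the type class, is $n!/\prod_k N_k!$; it costs about $n\log 2^L$ nats and must be handled more cleverly. We therefore intend to lower-bound the prior mass of $U$ using exchangeability: average over permutations, or note that for the risk bound we only need the posterior to favor $\bfbeta$ close to $\bfbeta_0$ in prediction norm. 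This is where we expect the real difficulty, and the first thing we would try is to use a Gaussian-smoothed neighborhood of radius of order $\sigma\sqrt{n}$ in the $X$-norm.

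\textbf{Step 4: conclude.} Given $\bfbeta$, the within-bin uniform positions are handled by the fact that for a ball of radius $\sigma\sqrt{n/\lambda_{\max}}$ the volume cost is at most of order $n$ nats, absorbed into the $\sigma^2 n$ term. Combining the lower bound on the evidence with a chi-square/Gaussian tail bound for $\sup\langle\boldsymbol\epsilon,\cdot\rangle$, integrated via a posterior Markov argument, gives the prediction-loss bound of Step 2. The Jensen reduction and Step 1 then yield $\EE_{g_0}\|\widehat{\bfbeta}-\bfbeta_0\|_2^2\lesssim\sigma^2\{n+2^L\log(n+1)\}/\lambda_{\min}(X^\top X)\to0$. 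The boundedness $g_0\le M_0$ and the compact support $[a,b]$ ensure that every bin has controlled mass and that all errors are bounded by $n(b-a)^2$, which handles the low-probability events.
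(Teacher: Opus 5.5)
Your outer reduction matches the paper: Jensen, then $\|X(\bfbeta-\bfbeta_0)\|_2^2\ge\lambda_{\min}(X^\top X)\|\bfbeta-\bfbeta_0\|_2^2$, with the integrand bounded by $nB^2$ ($B=b-a$) to absorb rare events. Had you simply invoked Proposition~\ref{prop:beta-l2-contract} as you suspect, splitting the posterior second moment at radius $A\sigma\sqrt{\{n+2^L\log(n+1)\}/\lambda_{\min}(X^\top X)}$ and using its $2e^{-cn}$ tail, you would have the paper's proof. Your self-contained route in Steps~2--4, however, has a genuine gap and cannot reach the stated hypothesis.

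The obstacle you flag in Step~3 is not the real one. The roughly $n\log 2^L$ nats of arrangement cost are cancelled by the within-bin density $2^L/B$ of each coordinate. Indeed, the node-wise formula $\pi_{L,\beta}(\bfbeta)=B^{-n}\prod_{l,k}2^{s_{lk}}\,\mathrm{B}\bigl(N^{(0)}_{\bfbeta}(I_k^l)+1,\,N^{(1)}_{\bfbeta}(I_k^l)+1\bigr)$, with $s_{lk}=N_{\bfbeta}(I_k^l)$, gives $\pi_{L,\beta}\ge B^{-n}(n+1)^{-2^L}$ uniformly on $[a,b]^n$. (Note also that the Beta$(1,1)$-split tree is not a symmetric Dirichlet on the $2^L$ bins.)

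The failure is in Step~4. The set $U=\{\bfbeta:\|X(\bfbeta-\bfbeta_0)\|_2\le\sigma\sqrt n\}$ has Lebesgue volume $\omega_n(\sigma\sqrt n)^n\det(X^\top X)^{-1/2}$, where $\omega_n$ is the unit-ball volume. Relative to $[a,b]^n$ it therefore costs $\tfrac12\log\det(X^\top X/\sigma^2)-O(n)\ge\tfrac n2\log\{\lambda_{\min}(X^\top X)/\sigma^2\}-O(n)$ nats, not $O(n)$. Under the corollary's own hypothesis, $\lambda_{\min}(X^\top X)/\sigma^2\gg n$, so this is at least of order $n\log n$. This is not slack in the density bound: by Fubini, $\EE_{g_0}\Pi_{L,\beta}(U)\le M_0^n\,\mathrm{Vol}(U)$, so for typical $\bfbeta_0$ the evidence really is that small.

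The evidence-lower-bound argument thus forces $r^2\gtrsim\sigma^2\bigl[\tfrac n2\log\{\lambda_{\min}(X^\top X)/\sigma^2\}+2^L\log(n+1)\bigr]$. That gives an $\ell_2$ bound of order $\delta_n\log(n/\delta_n)$ with $\delta_n:=\sigma^2 n/\lambda_{\min}(X^\top X)$, which need not vanish when $\delta_n\to0$ (take $\delta_n=1/\log n$).

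The missing idea is to use that $\bfbeta_0$ is itself random, with Lebesgue density $q_0=\prod_j g_0(\beta_j)\le M_0^n$, instead of working pointwise in $\bfbeta_0$. Consider the working law $\PP_L$, with $\widetilde{\bfbeta}\sim\Pi_{L,\beta}$ and $\widetilde{\bY}\mid\widetilde{\bfbeta}\sim\calN(X\widetilde{\bfbeta},\sigma^2I_m)$. There a posterior draw $\bfbeta^\star$ and $\widetilde{\bfbeta}$ are conditionally i.i.d.\ given $\widetilde{\bY}$. Passing through the OLS estimator then gives $\EE_L\,\Pi_{L,\beta}\bigl(\|X(\bfbeta-\widetilde{\bfbeta})\|_2>\sigma t\mid\widetilde{\bY}\bigr)\le 2\,\PP(\chi^2_n>t^2/4)$, with no prior-mass condition at all.

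Changing measure from $\PP_L$ to $\PP_{g_0}$ costs only $\sup_{\bfbeta}q_0(\bfbeta)/\pi_{L,\beta}(\bfbeta)\le(BM_0)^n(n+1)^{2^L}$. Because $q_0$ and $\pi_{L,\beta}$ are densities with respect to the same Lebesgue measure, the $\det(X^\top X)$ volume factor that breaks your Step~4 never appears. Taking $t^2\asymp n+2^L\log(n+1)$ then yields exactly the hypothesis $\sigma^2\{n+2^L\log(n+1)\}/\lambda_{\min}(X^\top X)\to0$.
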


Corollary~\ref{cor:beta-posterior-mean-l2} concerns a different target from Theorem~\ref{thm:main}. For coefficient recovery in the regime considered here, the regression likelihood drives the estimation of $\bfbeta_0$, while the truncation level $L$ enters through the complexity of the induced marginal prior on $\bfbeta$. 
Thus smaller truncation levels make the displayed sufficient condition easier to satisfy, but this should not be interpreted as a recommendation for the density recovery problem. For instance, when $L=0$, the working prior on $g$ is the uniform density on $[a,b]$; such a prior may still yield consistent estimation of $\bfbeta_0$ under a well-conditioned design, but it cannot consistently estimate a non-uniform mixing density $g_0$.

\begin{remark}[Normal sequence model with fixed $\sigma^2$]
The normal sequence model with fixed noise level $\sigma^2$ can be written as the special case of \eqref{eq:lik-linear} with $m=n$ and $X=I_n$. However, Theorem~\ref{thm:main} does not yield a consistency result in that case. Indeed, when $X=I_n$, $\lambda_{\min}(X^\top X)=1$, so $L_{\mathrm{noise}} \not\to \infty$. Hence the theorem is vacuous in that case.

This is not to say that the P\'olya tree posterior is inconsistent in the normal sequence model, only that the present proof strategy, developed for a regression setting, does not apply to the sequence case. 
The reason is that the argument used here controls the P\'olya tree posterior by first showing that the posterior for the latent coefficient vector $\bfbeta$ concentrates sufficiently tightly around the realized vector $\bfbeta_0$ in $\ell_2$, and then using this to show that replacing $\bfbeta_0$ by a posterior draw $\bfbeta$ does not substantially change the empirical bin counts at the growing resolution $L$. As $L\to\infty$, the cells in the dyadic partition shrink, and so this route requires increasingly fine coordinate-level control of $\bfbeta$ around $\bfbeta_0$. Such control is unavailable in the normal sequence model with fixed noise level, where each coefficient is observed only once and the uncertainty in each coordinate does not vanish. A consistency proof for that setting would therefore require a different argument.
\end{remark}

\subsection{Random designs}

We next state a random-design consequence of Theorem~\ref{thm:main}.
In this subsection, probability statements involving $X$ are with respect to the random-design law, denoted by $\PP_X$.
For a random design, write $L_{\mathrm{noise}}(X)$ for $L_{\mathrm{noise}}$ evaluated at the realized matrix $X$, and set
\[
L(X):=\min\{L_{\mathrm{Cast}},L_{\mathrm{noise}}(X)\}.
\]

The relevant quantity is
\[
\frac{\lambda_{\min}(X^\top X)}{\sigma^2 n^{3/2}},
\]
which controls $L_{\mathrm{noise}}(X)$. For Gaussian random designs with i.i.d.~rows whose second-moment matrices have eigenvalues uniformly bounded away from zero, Lemma~\ref{lem:gaussian-random-design-conditioning} in Appendix~\ref{app:gaussian-random-designs} shows that, provided the design is sufficiently tall,
\[
\lambda_{\min}(X^\top X)\gtrsim m
\]
with probability tending to one. Thus, under the growth condition
\[
\frac{m}{\sigma^2 n^{3/2}}\to\infty,
\]
we obtain $L(X)\to\infty$ in $\PP_X$-probability, which is the random-design analogue of the fixed-design assumption in Theorem~\ref{thm:main}. This yields the following random-design consequence.
\begin{corollary}[Gaussian random designs]
\label{cor:random-design}
Assume the true data-generating model of Section~\ref{sec:model}, except that the design matrix $X\in\RR^{m\times n}$ is random and independent of $\bfbeta_0$. Conditional on $(X,\bfbeta_0)$,
\[
\bY\mid X,\bfbeta_0
\sim
\calN(X\bfbeta_0,\sigma^2 I_m).
\]
The P\'olya tree posterior is computed conditional on the realized design $X$, treating $X$ as fixed.

Suppose the rows $x_1,\dots,x_m\in\RR^n$ of $X$ are i.i.d. Gaussian vectors with
\[
x_i\sim \calN(\bmu_n,\Sigma_n),
\qquad
\Omega_n:=\Sigma_n+\bmu_n\bmu_n^\top,
\qquad
\lambda_{\min}(\Omega_n)\ge \kappa
\]
for some constant $\kappa>0$ independent of $m,n$. Suppose further that
\[
m\ge \Gamma n
\]
for all sufficiently large $m,n$, where $\Gamma$ is the universal constant from Lemma~\ref{lem:gaussian-random-design-conditioning}, and that
\[
\frac{m}{\sigma^2 n^{3/2}}\longrightarrow\infty .
\]
Let $L(X)$ be the random truncation level defined above.
Then $L(X)\to\infty$ in $\PP_X$-probability, $\varepsilon_n(L(X))\to0$ in $\PP_X$-probability, and there exists a constant $C_0>0$ such that
\[
\EE_X\EE_{g_0}^X\!\left[
\Pi_{L(X),X}\!\left(
\|g-g_0\|_\infty>C_0\varepsilon_n(L(X))
\mid
\bY
\right)
\right]
\longrightarrow 0.
\]
Here $\EE_X$ denotes expectation over the random design, $\EE_{g_0}^X$ denotes expectation over $(\bfbeta_0,\bY)$ conditional on the realized design $X$, and $\Pi_{L(X),X}(\cdot\mid\bY)$ denotes the P\'olya tree posterior computed with truncation level $L(X)$ and fixed design $X$.
\end{corollary}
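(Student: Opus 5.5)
The plan is to reduce Corollary~\ref{cor:random-design} to Theorem~\ref{thm:main} by conditioning on $X$. I will apply the fixed-design theorem on a high-probability event for the design and then integrate over $\PP_X$ with dominated convergence.

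\textbf{Step 1: conditioning of the design.} Lemma~\ref{lem:gaussian-random-design-conditioning}, together with $m\ge\Gamma n$ and $\lambda_{\min}(\Omega_n)\ge\kappa$, gives an event $E_{m,n}$ with $\PP_X(E_{m,n})\to1$ on which $\lambda_{\min}(X^\top X)\ge c\kappa m$ for a universal $c>0$. On $E_{m,n}$, the right-hand side of the defining inequality of $L_{\mathrm{noise}}(X)$ is at least $c_{\mathrm{noise}}c\kappa\, m/(\sigma^2 n^{3/2})$, and this is a deterministic sequence tending to infinity. The map $\ell\mapsto 2^{3\ell/2}/\ell^{3/2}$ is eventually increasing and unbounded. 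Hence $L_{\mathrm{noise}}(X)\ge \underline L_{m,n}$ on $E_{m,n}$, where $\underline L_{m,n}\to\infty$ is the deterministic level obtained by replacing $\lambda_{\min}$ with $c\kappa m$. Since $L_{\mathrm{Cast}}\to\infty$ deterministically, we get $L(X)\ge \min\{L_{\mathrm{Cast}},\underline L_{m,n}\}\to\infty$ on $E_{m,n}$. This proves $L(X)\to\infty$ in $\PP_X$-probability. On the same event, $L(X)\le L_{\mathrm{Cast}}$ forces $2^{L(X)}\lesssim (n/\log n)^{1/(2\alpha+1)}$, so $L(X)2^{L(X)}/n\to0$. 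Both terms of $\varepsilon_n(L(X))$ therefore vanish, and $\varepsilon_n(L(X))\to0$ in probability.

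\textbf{Step 2: uniformity of the fixed-design bound.} Theorem~\ref{thm:main} is stated for a single deterministic sequence of designs, whereas here $X$ varies with the realization. I need a quantitative version. Its proof gives
\[
\EE_{g_0}^X\big[\Pi_{L,X}(\|g-g_0\|_\infty>C_0\varepsilon_n(L)\mid\bY)\big]\le \delta(L),
\]
where $\delta(L)\to0$ as $L\to\infty$ and depends on $X$ only through the requirement $L\le L_{\mathrm{noise}}(X)$. Concretely, $\delta(L)$ comes from the $C_1e^{-c_0L}$ terms of Theorem~\ref{thm:beta-observed} and from the bin-count perturbation bound obtained via Proposition~\ref{prop:beta-l2-contract}. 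Here $C_0$ is free of $X$, because the constants depend only on $g_0,a,b,\alpha,M$ and $c_{\mathrm{noise}}$. I expect this step to be the main obstacle. It requires re-reading the proof of Theorem~\ref{thm:main} and checking that every use of $X$ enters only through $\lambda_{\min}(X^\top X)/(\sigma^2n^{3/2})$, in the form guaranteed by $L\le L_{\mathrm{noise}}(X)$. If the proof also uses a weaker condition in the spirit of Corollary~\ref{cor:beta-posterior-mean-l2}, I would verify that it follows from $L\le L_{\mathrm{noise}}$ as well. If a fully uniform bound proves awkward, the fallback is a subsequence argument. Any deterministic sequence $X_{m,n}\in E_{m,n}$ satisfies the hypotheses of Theorem~\ref{thm:main}, and the conclusion then follows by taking the supremum over $E_{m,n}$ and arguing by contradiction along a sequence that nearly attains it.

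\textbf{Step 3: integration over the design.} $X$ is independent of $\bfbeta_0$, and the posterior is computed treating $X$ as fixed. Hence the inner expectation $\EE_{g_0}^X[\cdot]$ is exactly the fixed-design quantity at the realized $X$. Splitting on $E_{m,n}$ gives
\[
\EE_X\EE_{g_0}^X[\cdots]\le \sup_{X\in E_{m,n}}\EE_{g_0}^X[\cdots]+\PP_X(E_{m,n}^c).
\]
By Step 2, the first term is at most $\delta(\min\{L_{\mathrm{Cast}},\underline L_{m,n}\})\to0$, using $\delta$ monotone, or dominated by a monotone envelope. The second term tends to zero by Step 1. Since posterior probabilities lie in $[0,1]$, no integrability issues arise, and the corollary follows.
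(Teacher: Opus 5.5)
Your proposal is correct and follows the paper's proof essentially step for step. The paper uses the same high-probability event from Lemma~\ref{lem:gaussian-random-design-conditioning} and the same deterministic lower level $\underline L_{m,n}$, proves $\sup_{X\in\mathcal E_{m,n}}$ of the fixed-design quantity tends to zero by exactly the subsequence/contradiction argument you list as your fallback, and then splits $\EE_X$ on $\mathcal E_{m,n}$ as in your Step~3. Your primary quantitative route also works: the proof of Theorem~\ref{thm:main} gives a bound of the form $Ce^{-cL}+2e^{-cn}$, with constants depending on $X$ only through full column rank and $L=\min\{L_{\mathrm{Cast}},L_{\mathrm{noise}}(X)\}$. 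Your explicit remark that $C_0$ must not depend on $X$ states a point the paper's contradiction argument relies on only implicitly.
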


\section{Proof sketch of Theorem~\ref{thm:main}}
\label{sec:proof-sketch}

We now explain the main ideas behind the proof of Theorem~\ref{thm:main}; the full proof appears in Appendix~\ref{app:proof-main}.
At a high level, the proof transfers the direct-observation P\'olya tree benchmark of Theorem~\ref{thm:beta-observed} to the regression setting. This transfer is not automatic: the benchmark applies once the coefficient vector has the correct empirical bin counts, whereas in the regression model the coefficients are latent. The main additional work, and the main technical challenge, is therefore to show that, under suitable conditioning of the design matrix and for truncation levels $L\le L_{\mathrm{noise}}$, the posterior for $\bfbeta$ concentrates tightly enough around the realized vector $\bfbeta_0$ for those count conditions to remain valid. We break the proof outline into the following steps.

\bigskip

\noindent\hspace*{\parindent}\textbf{Step 1. Reduction to the direct-observation benchmark.}\enspace 
Denote the complement of the target sup-norm ball by 
\[
T_g:=\{g:\|g-g_0\|_\infty > C_0\,\varepsilon_n(L)\}. 
\]
Using the posterior mixture representation from Section~\ref{sec:pt},
\[
\Pi_L(T_g\mid \bY)
=
\int \Pi_L(T_g\mid \bfbeta)\,\Pi_{L,\beta}(d\bfbeta\mid \bY),
\]
we can separate the contribution from ``good'' coefficient vectors and ``bad'' ones. 
That is, for any set $\mathcal{B}\subset [a,b]^n$,
\begin{align}
\Pi_L(T_g\mid \bY)
&=
\int_{\mathcal{B}} \Pi_L(T_g\mid \bfbeta)\,\Pi_{L,\beta}(d\bfbeta\mid \bY)
+
\int_{\mathcal{B}^c} \Pi_L(T_g\mid \bfbeta)\,\Pi_{L,\beta}(d\bfbeta\mid \bY) \nonumber\\
&\le
\sup_{\bfbeta\in\mathcal{B}} \Pi_L(T_g\mid \bfbeta)
+
\Pi_{L,\beta}(\bfbeta\notin\mathcal{B}\mid \bY).
\label{eq:proof-decomp-pointwise}
\end{align}
Taking expectation under $\PP_{g_0}$ yields
\begin{equation}
\label{eq:tower}
\EE_{g_0}\!\left[\Pi_L(T_g\mid \bY)\right]
\le
\sup_{\bfbeta\in\mathcal{B}} \Pi_L(T_g\mid \bfbeta)
+
\EE_{g_0}\!\left[\Pi_{L,\beta}(\bfbeta\notin\mathcal{B}\mid \bY)\right].
\end{equation}
This is formalized in Appendix~\ref{app:proof-main} as Lemma~\ref{lem:appB-tower}.

We choose $\mathcal{B}=\mathcal{B}(M,L)$ to be the good-count event from Theorem~\ref{thm:beta-observed}, namely the set of vectors whose empirical bin counts are uniformly close to the corresponding probabilities under $g_0$. 
For this choice, the first term in \eqref{eq:tower} is controlled directly by the direct-observation benchmark Theorem~\ref{thm:beta-observed}:
\[
\sup_{\bfbeta\in\mathcal{B}(M,L)} \Pi_L(T_g\mid \bfbeta)\le C_1e^{-c_0L}.
\]
Therefore, the proof of Theorem~\ref{thm:main} reduces to showing that the posterior for the latent coefficient vector $\bfbeta$ places asymptotically negligible mass outside $\mathcal{B}(M,L)$.

\bigskip
\noindent\hspace*{\parindent}\textbf{Step 2. From bin-count control to \texorpdfstring{$\ell_2$}{l2}-control of \texorpdfstring{$\bfbeta$}{beta}.}\enspace 
We next explain why $\bfbeta$ membership in $\mathcal{B}(M,L)$ can be enforced by showing that $\bfbeta$ is sufficiently close to the true coefficient vector $\bfbeta_0$ in $\ell_2$.

For each cell $I_k^l$, by the triangle inequality,
\begin{align}
\big|N_{\bfbeta}(I_k^l)-nP_0(I_k^l)\big|
&\le
\big|N_{\bfbeta}(I_k^l)-N_{\bfbeta_0}(I_k^l)\big|
+
\big|N_{\bfbeta_0}(I_k^l)-nP_0(I_k^l)\big|.
\label{eq:count-triangle}
\end{align}
The second term is uniformly small over all cells whenever $\bfbeta_0\in\mathcal{B}(M,L)$, by definition of $\mathcal{B}(M,L)$. This event occurs with high $\PP_{g_0}$-probability, by Theorem~\ref{thm:beta-observed}.

The new issue in the regression model is the first term in \eqref{eq:count-triangle}, which measures how much the bin counts change when $\bfbeta_0$ is replaced by a posterior draw $\bfbeta$.
A count can change only through coordinates $j$ that cross a cell boundary when moving from $\beta_{0j}$ to $\beta_j$.
The key observation is that such a crossing can occur only if either $\beta_{0j}$ and $\beta_j$ are sufficiently far apart, or if $\beta_{0j}$ is close to a cell boundary, such that a small separation between $\beta_{0j}$ and $\beta_j$ can still cross a cell boundary.

Lemma~\ref{lem:appB-bincount-stability} formalizes this observation by bounding a cell's count discrepancy in terms of two quantities: the number of true coefficients lying near the relevant cell boundary, and the $\ell_2$-distance between $\bfbeta$ and $\bfbeta_0$.
The first quantity is controlled uniformly over all cells and levels by Lemma~\ref{lem:appB-boundary-counts}, with high $\PP_{g_0}$-probability. Together, these two lemmas show that posterior draws $\bfbeta$ have the required empirical bin counts whenever they are sufficiently close to $\bfbeta_0$ in $\ell_2$. More precisely, the proof reduces the remaining task to showing that the posterior probability of
\[
\|\bfbeta-\bfbeta_0\|_2^2
\gtrsim
\frac{L^{3/2}}{n^{1/2}2^{3L/2}}
\]
vanishes. Thus, after the count-stability argument, it remains to prove posterior $\ell_2$-contraction of the latent coefficient vector $\bfbeta$.

\bigskip
\noindent\hspace*{\parindent}\textbf{Step 3. Posterior contraction of the latent coefficients.}\enspace 
The required $\ell_2$-contraction is provided by Proposition~\ref{prop:beta-l2-contract}. It shows that, for suitable constants
$A,c>0$,
\[
\EE_{g_0}\!\left[
\Pi_{L,\beta}\!\left(
\|\bfbeta-\bfbeta_0\|_2
>
A\sigma
\sqrt{
\frac{n+2^L\log(n+1)}
{\lambda_{\min}(X^\top X)}
}
\;\middle|\;
\bY
\right)
\right]
\le 2e^{-cn}.
\]
The factor $\lambda_{\min}(X^\top X)$ quantifies how well the design separates different coefficient vectors: better conditioning of $X$ yields stronger recovery of $\bfbeta_0$.

Since $L\le L_{\mathrm{Cast}}$, the term $2^L\log(n+1)$ is $O(n)$, so the posterior $\ell_2$-radius is essentially
\[
\sigma\sqrt{\frac{n}{\lambda_{\min}(X^\top X)}}.
\]
The definition of $L_{\mathrm{noise}}$ ensures that this radius is small enough relative to the count-stability threshold from Step 2. Indeed, $L\le L_{\mathrm{noise}}$ implies
\[
\frac{\sigma^2 n}{\lambda_{\min}(X^\top X)}
\lesssim
\frac{L^{3/2}}{n^{1/2}2^{3L/2}}.
\]
Thus, under the truncation choice in Theorem~\ref{thm:main}, posterior draws of $\bfbeta$ are close enough to $\bfbeta_0$ to preserve the empirical bin count conditions required by the direct-observation benchmark.

\bigskip

\noindent\hspace*{\parindent}\textbf{Step 4. Conclusion.}\enspace 
Combining the count-stability argument from Step 2 with the posterior $\ell_2$-contraction from Step 3 gives
\[
\EE_{g_0}\!\left[
\Pi_{L,\beta}(\bfbeta\notin\mathcal B(M,L)\mid\bY)
\right]\longrightarrow 0.
\]
Together with the direct-observation bound from Step 1 and the decomposition \eqref{eq:tower}, this yields
\[
\EE_{g_0}\!\left[
\Pi_L(T_g\mid\bY)
\right]\longrightarrow 0.
\]
Since $T_g=\{g:\|g-g_0\|_\infty>C_0\varepsilon_n(L)\}$, this is exactly the conclusion of Theorem~\ref{thm:main}.

\section*{Acknowledgments}
A.W. was supported by the Israeli Science Foundation (ISF) under grant no.~2679/24.


\bibliographystyle{abbrvnat}
\bibliography{references}


\appendix

\section{Proof of Theorem~\ref{thm:beta-observed}}

This appendix proves the direct-observation benchmark, Theorem~\ref{thm:beta-observed}. It follows the argument of \citet{castillo2017polya}, adapted to our setup and notation (support $[a,b]$, truncated P\'olya tree, $\mathrm{Beta}(1,1)$ at all levels). Throughout this appendix, set
\[
B:=b-a,
\]
and write $C,c>0$ for finite positive constants whose values may change from line to line. We will also repeatedly use the consequence
of the choice $L\le L_{\mathrm{Cast}}$, that
\[
L2^L/n=o(1).
\]

Because this appendix almost identically follows the structure of \citet{castillo2017polya}, we at times omit certain computational details that are unchanged from that work, and refer the reader to \citet{castillo2017polya} for these details.

\subsection*{A.1. Haar basis on $[a,b]$}

Let $T:[a,b]\to[0,1]$ be the affine map $T(x)=(x-a)/B$. Let
\[
\phi=\mathbbm{1}_{[0,1]},
\qquad
\psi=-\mathbbm{1}_{[0,1/2)}+\mathbbm{1}_{[1/2,1]}
\]
on $[0,1]$. Define the rescaled Haar functions on $[a,b]$ by
\[
\varphi(x):=B^{-1/2}\mathbbm{1}_{[a,b]}(x),
\qquad
\psi_{lk}(x):=B^{-1/2}2^{l/2}\psi\{2^lT(x)-k\},
\quad l\ge 0,\quad 0\le k<2^l .
\]
Then
\[
\{\varphi\}\cup\{\psi_{lk}:l\ge 0,\ 0\le k<2^l\}
\]
is an orthonormal basis of $L^2([a,b])$. For a square-integrable function $f$, write
\[
f_{lk}:=\langle f,\psi_{lk}\rangle_2,
\qquad
f_\varphi:=\langle f,\varphi\rangle_2 .
\]
Since $g_0$ is $\alpha$-H\"older on $[a,b]$, with $\alpha\in(0,1]$, the Haar coefficients satisfy
\begin{equation}
\label{eq:app-haar-holder-bound}
\sup_{l\ge 0}\sup_{0\le k<2^l}
\left(\frac{2^l}{B}\right)^{1/2+\alpha}|g_{0,lk}|<\infty ,
\end{equation}
which is the analogue on $[a,b]$ of the corresponding bound on $[0,1]$ recalled in \citet{castillo2017polya}.

For an integer $L\ge 0$, and a function $f$ in $L^2([a,b])$, let $f^{(L)}$ denote the $L^2$-projection of $f$ onto the linear span of $\{\varphi\}\cup\{\psi_{lk}:0\le l\le L-1,\ 0\le k<2^l\}$. Denote $f^{(L^c)}:=f-f^{(L)} .$

This convention matches the depth-$L$ truncated P\'olya tree: a draw $g$ from the prior is constant on the terminal cells $I_k^L$ and hence satisfies $g=g^{(L)}$.

\subsection*{A.2. Good bin-count set}

For $0\le l\le L$ and $0\le k<2^l$, recall the notation
\[
N_{\bfbeta}(I_k^l):=\sum_{j=1}^n\mathbbm{1}\{\beta_j\in I_k^l\},
\qquad
P_0(I_k^l):=\int_{I_k^l}g_0(x)\,dx, 
\]
\[
\Lambda_n(l):=\sqrt{(l+L)n2^{-l}}.
\]
For $M>0$, define
\begin{equation}
\label{eq:app-B-def}
\mathcal B(M,L)
:=
\left\{
\bfbeta\in[a,b]^n:
\left|N_{\bfbeta}(I_k^l)-nP_0(I_k^l)\right|
\le
M\{\Lambda_n(l)\vee(l+L)\}
\ \text{for all }0\le l\le L,\ 0\le k<2^l
\right\}.
\end{equation}

\begin{lemma}[Bin-count concentration]
\label{lem:app-bin-count-concentration}
For $M$ sufficiently large, there exist $C,c>0$ such that
\[
\PP_{g_0}\{\bfbeta_0\in\mathcal B(M,L)\}\ge 1-Ce^{-cL}.
\]
\end{lemma}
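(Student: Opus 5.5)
The plan is a Bernstein bound for each cell, followed by a union bound over all cells at levels $0\le l\le L$.

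Fix $0\le l\le L$ and $0\le k<2^l$. Under $\PP_{g_0}$ the count $N_{\bfbeta_0}(I_k^l)$ is $\mathrm{Binomial}(n,p)$ with $p=P_0(I_k^l)$. Since $g_0\le M_0$, we have $p\le M_0B2^{-l}$, so the variance is at most $np\le M_0Bn2^{-l}$. Bernstein's inequality for a sum of bounded centered indicators gives
\[
\PP_{g_0}\big(|N_{\bfbeta_0}(I_k^l)-np|>t\big)\le 2\exp\!\Big(-\frac{t^2/2}{np+t/3}\Big).
\]
Take $t=M\{\Lambda_n(l)\vee(l+L)\}$ and bound the exponent in two cases.

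If $\Lambda_n(l)\ge l+L$, the variance term dominates. Then $t^2\ge M^2(l+L)n2^{-l}$, and $t/3\le M\Lambda_n(l)/3\le M n2^{-l}/3$ because $\Lambda_n(l)\le n2^{-l}$ exactly in this case. The exponent is therefore at least
\[
\frac{M^2(l+L)}{2(M_0B+M/3)}\gtrsim M(l+L)
\]
for large $M$.

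If $l+L>\Lambda_n(l)$, then $n2^{-l}<l+L$. So $np\le M_0B(l+L)$ and $t=M(l+L)$, and the exponent is at least
\[
\frac{M^2(l+L)^2/2}{M_0B(l+L)+M(l+L)/3}\gtrsim M(l+L).
\]
In both cases the per-cell failure probability is at most $2e^{-c_1M(l+L)}$, with $c_1$ depending only on $M_0B$.

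Next, union bound over the $2^l$ cells at each level and then over the levels:
\[
\sum_{l=0}^{L}2^{l}\cdot 2e^{-c_1M(l+L)}\le 2e^{-c_1ML}\sum_{l\ge0}e^{l(\log 2-c_1M)}.
\]
Once $c_1M>\log 2+1$, this is at most $Ce^{-c_1ML}\le Ce^{-cL}$, and on the complement of the union $\bfbeta_0\in\mathcal B(M,L)$. The facts $\bfbeta_0\in[a,b]^n$ almost surely and $\sum_k P_0(I_k^l)=1$ are automatic.

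The only delicate point is the regime where cells are sparse, $n2^{-l}\lesssim l+L$. There the Gaussian part of Bernstein is not enough, and the $\vee(l+L)$ floor in the definition of $\mathcal B$ is exactly what keeps the exponent linear in $l+L$. This linear growth in $l$ is what absorbs the $2^l$ cardinality factor in the union bound. Since the bound holds for every $L\ge 0$, no use of $L\le L_{\mathrm{Cast}}$ is needed here.
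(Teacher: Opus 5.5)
Your proof is correct and follows the paper's argument: Bernstein's inequality for each cell with $t=M\{\Lambda_n(l)\vee(l+L)\}$, split into the regimes $\Lambda_n(l)\ge l+L$ and $\Lambda_n(l)<l+L$ to get a per-cell bound of order $e^{-cM(l+L)}$, then a union bound over the $2^l$ cells and $L+1$ levels. You also write out the two-case exponent computation that the paper leaves implicit, and you correctly observe that only the upper bound $g_0\le M_0$ is needed, not the two-sided $p_{lk}\asymp 2^{-l}$ the paper invokes.
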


\begin{proof}
Fix $l,k$ and set $p_{lk}:=P_0(I_k^l)$. By Bernstein's inequality, for all $t>0$,
\[
\PP_{g_0}\left(
\left|N_{\bfbeta_0}(I_k^l)-np_{lk}\right|>t
\right)
\le
2\exp\left\{-\frac{t^2/2}{np_{lk}(1-p_{lk})+t/3}\right\}.
\]
Since $g_0$ is bounded above and below on $[a,b]$, $p_{lk}\asymp 2^{-l}$ uniformly over $l,k$. Taking
\[
t=M\{\Lambda_n(l)\vee(l+L)\}
\]
and considering separately the regimes $\Lambda_n(l)\ge l+L$ and $\Lambda_n(l)<l+L$, we obtain
\[
\PP_{g_0}\left(
\left|N_{\bfbeta_0}(I_k^l)-np_{lk}\right|>
M\{\Lambda_n(l)\vee(l+L)\}
\right)
\le
C e^{-cM(l+L)} .
\]
A union bound over $k=0,\ldots,2^l-1$ and $l=0,\ldots,L$ gives
\[
\PP_{g_0}\{\bfbeta_0\notin\mathcal B(M,L)\}
\le
\sum_{l=0}^L 2^l C e^{-cM(l+L)}
\le Ce^{-cL},
\]
provided $M$ is chosen sufficiently large.
\end{proof}

\subsection*{A.3. Auxiliary bounds}

Let $\Pi_L(\cdot\mid\bfbeta)$ denote the P\'olya tree posterior under direct observations $\bfbeta$. Let
\[
\bar g:=\EE_{\Pi_L}[g\mid\bfbeta]
\]
be the posterior mean density, and define
\[
\bar P(I_k^l):=\int_{I_k^l}\bar g(x)\,dx,
\qquad
P_0(I_k^l):=\int_{I_k^l}g_0(x)\,dx .
\]
For a parent cell $I_k^l$, write
\[
\bar Y_{l,k}:= \frac{\bar P(I_{2k}^{l+1})}{\bar P(I_k^l)},
\qquad
y_{l,k}:=\frac{P_0(I_{2k}^{l+1})}{P_0(I_k^l)} .
\]
The complementary right-child split proportions are $1-\bar Y_{l,k}$ and $1-y_{l,k}$.

\begin{lemma}
\label{lem:app-mass-ratio}
For every $0\le l\le L$ and $0\le k<2^l$, on $\bfbeta\in\mathcal B(M,L)$,
\[
\left|\frac{\bar P(I_k^l)}{P_0(I_k^l)}-1\right|
\le
C\left(\frac{2^l}{n}+\sqrt{\frac{L2^l}{n}}\right).
\]
\end{lemma}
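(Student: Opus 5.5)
The plan is to use the explicit product form of the P\'olya tree posterior mean, and then control each factor along the root-to-cell path using the counts guaranteed on $\mathcal B(M,L)$.

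\textbf{Product representation.} Given $\bfbeta$, the split variables are independent, with $V_{i,k'}\mid\bfbeta\sim\mathrm{Beta}(1+N^{(0)}_{\bfbeta}(I^i_{k'}),1+N^{(1)}_{\bfbeta}(I^i_{k'}))$. The mass of a cell is a product of these independent splits along its path. Hence $\bar P(I_k^l)$ is the product of the posterior means of the splits. For the path $I^0_{k_0}\supset I^1_{k_1}\supset\cdots\supset I^l_{k_l}=I_k^l$ this gives
\[
\bar P(I_k^l)=\prod_{i=0}^{l-1}\frac{1+N_{\bfbeta}(I^{i+1}_{k_{i+1}})}{2+N_{\bfbeta}(I^{i}_{k_{i}})},
\qquad
P_0(I_k^l)=\prod_{i=0}^{l-1}\frac{P_0(I^{i+1}_{k_{i+1}})}{P_0(I^{i}_{k_{i}})},
\]
where the second identity is a telescoping product with $P_0(I^0_0)=1$. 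Therefore the ratio $\bar P(I_k^l)/P_0(I_k^l)$ equals $\prod_{i<l}R_i$, with
\[
R_i=\frac{1+N_{\bfbeta}(I^{i+1}_{k_{i+1}})}{nP_0(I^{i+1}_{k_{i+1}})}\cdot\frac{nP_0(I^{i}_{k_{i}})}{2+N_{\bfbeta}(I^{i}_{k_{i}})}.
\]

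\textbf{Per-factor bound.} Since $m_0\le g_0\le M_0$, we have $nP_0(I^i_{k'})\asymp n2^{-i}$ uniformly. On $\mathcal B(M,L)$ each count satisfies $N_{\bfbeta}(I^i_{k'})=nP_0(I^i_{k'})+O(\Lambda_n(i)\vee(i+L))$. Dividing by $n2^{-i}$, and using $i+L\le 2L$, the relative error of each numerator and denominator is at most
\[
\delta_i:=C\Big(\sqrt{\tfrac{L2^{i}}{n}}+\tfrac{L2^{i}}{n}+\tfrac{2^{i}}{n}\Big),
\]
where the last term absorbs the additive constants $1$ and $2$. The standing consequence $L2^L/n=o(1)$ of $L\le L_{\mathrm{Cast}}$ gives $\delta_i\le \delta_L=o(1)$ uniformly in $i\le L$. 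So for large $n$ the denominators are bounded away from zero relative to $n2^{-i}$, and $|\log R_i|\le C\delta_{i+1}$.

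\textbf{Summation and conclusion.} Each term in $\delta_i$ grows geometrically in $i$, with ratio $\sqrt2$ or $2$. Hence
\[
\sum_{i<l}|\log R_i|\le C\Big(\sqrt{L2^l/n}+L2^l/n+2^l/n\Big),
\]
which is dominated by the last level. Also $L2^l/n\le\sqrt{L2^l/n}$ because $L2^l/n\le L2^L/n\le1$. This bounds the total by $C(2^l/n+\sqrt{L2^l/n})$, and the bound is $o(1)$. Finally, $|e^x-1|\le C|x|$ for $|x|\le1$ converts this bound on $|\log(\bar P/P_0)|$ into the claimed bound on $|\bar P/P_0-1|$.

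\textbf{Main obstacle.} The only delicate point is uniformity over all $l\le L$ and all $k$. I expect this to be handled by two facts: the constants depend only on $m_0,M_0,M$, and the geometric summation keeps the accumulated error along the path of the same order as the error at its final level $l$.
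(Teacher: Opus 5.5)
Your proposal is correct and follows essentially the same route as the paper: the same factorization of $\bar P(I_k^l)/P_0(I_k^l)$ into posterior-mean versus true split ratios along the root-to-cell path, the same per-factor bound obtained from the count deviations on $\mathcal B(M,L)$ together with $L2^L/n=o(1)$, and the same geometric summation dominated by the last level. The differences are cosmetic. You take logarithms and use $|e^x-1|\le C|x|$ where the paper invokes the product bound of Lemma~\ref{lem:app-product-bound}. You also explicitly carry the $(i+L)$ branch of the maximum, which the paper drops without comment; that is justified because $\Lambda_n(i)\ge i+L$ once $(i+L)2^i\le n$.
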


\begin{proof}
For fixed cell $I_k^l$, define its level-$i$ ancestor by
\[
s_i=s_i(l,k):=\left\lfloor \frac{k}{2^{l-i}}\right\rfloor,
\qquad
I_i^{(l,k)}:=I_{s_i}^{i},
\qquad 0\le i\le l .
\]
Thus $I_0^{(l,k)}=I_0^0$ and $I_l^{(l,k)}=I_k^l$. Along the path from the root to $I_k^l$,
\[
\bar P(I_k^l)=\prod_{i=1}^l \bar q_i^{(l,k)},
\qquad
P_0(I_k^l)=\prod_{i=1}^l q_i^{(l,k)},
\]
where
\[
q_i^{(l,k)}:=\frac{P_0(I_i^{(l,k)})}{P_0(I_{i-1}^{(l,k)})}
\]
is the true split proportion from the level-$(i-1)$ ancestor into the level-$i$ ancestor, and $\bar q_i^{(l,k)}$ is the corresponding posterior mean split proportion. Under the Beta$(1,1)$ splits,
\[
\bar q_i^{(l,k)}
=
\frac{1+N_{\bfbeta}(I_i^{(l,k)})}
{2+N_{\bfbeta}(I_{i-1}^{(l,k)})}.
\]
On $\bfbeta \in \mathcal B(M,L)$, write
\[
N_{\bfbeta}(I_i^{(l,k)})=nP_0(I_i^{(l,k)})+\delta_i^{(l,k)},
\qquad
|\delta_i^{(l,k)}|\le M\{\Lambda_n(i)\vee(i+L)\}.
\]

This yields
\[
\bar q_i^{(l,k)} = q_i^{(l,k)} \frac{1 + n^{-1}(1 + \delta_i^{(l,k)})/P_0(I_i^{(l,k)})}{1 + n^{-1}(2 + \delta_{i-1}^{(l,k)})/P_0(I_{i-1}^{(l,k)})}
\]
Using $|\delta_i^{(l,k)}| \lesssim \sqrt{nL2^{-i}} \ll n2^{-i} \lesssim nP_0(I_i^{(l,k)})$ (the second to last inequality because $L2^{L} = o(n)$, the last one because $g_0$ is bounded away from $0$), we deduce the denominator is bounded away from $0$, and so

\[
\left|\frac{\bar q_i^{(l,k)}}{q_i^{(l,k)}}-1\right|
\le
C\left(\frac{2^i}{n}+\sqrt{\frac{L2^i}{n}}\right).
\]
Summing this bound over $i\le l$ gives
\[
\sum_{i=1}^l\left|\frac{\bar q_i^{(l,k)}}{q_i^{(l,k)}}-1\right|
\le
C\left(\frac{2^l}{n}+\sqrt{\frac{L2^l}{n}}\right),
\]
and the elementary product bound in Lemma~\ref{lem:app-product-bound} yields the claim.
\end{proof}

\begin{lemma}
\label{lem:app-split-ratio}
For every $0\le l\le L-1$ and $0\le k<2^l$, on $\bfbeta\in\mathcal B(M,L)$,
\[
\left|\frac{\bar Y_{l,k}}{y_{l,k}}-1\right|
\le
C\frac{2^{l/2}}{n}
\left(2^lB^{1/2}|g_{0,lk}|+\sqrt{nL}\right).
\]
The same bound holds for the right-child split ratios after replacing $\bar Y_{l,k},y_{l,k}$ by $1-\bar Y_{l,k},1-y_{l,k}$.
\end{lemma}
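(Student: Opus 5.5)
We write the posterior mean split explicitly and compare it with the true split. Under the Beta$(1,1)$ prior, conditionally on $\bfbeta$,
\[
\bar Y_{l,k}=\frac{1+N_{\bfbeta}(I_{2k}^{l+1})}{2+N_{\bfbeta}(I_k^l)}.
\]
This holds because $\bar P(I_{2k}^{l+1})/\bar P(I_k^l)$ equals the posterior mean of $V_{l,k}$: the posterior splits are independent across nodes, so the mean of the product along a path factorizes. On $\mathcal B(M,L)$ write
\[
N_{\bfbeta}(I_{2k}^{l+1})=nP_0(I_{2k}^{l+1})+\delta',\qquad N_{\bfbeta}(I_k^l)=nP_0(I_k^l)+\delta,
\]
with $|\delta|\le M\{\Lambda_n(l)\vee(l+L)\}$ and $|\delta'|\le M\{\Lambda_n(l+1)\vee(l+1+L)\}$. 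Since $l\le L$ and $L2^L=o(n)$, both errors are $\lesssim\sqrt{nL2^{-l}}$, because $\Lambda_n(l)\ge l+L$ in this range.

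Next we expand the ratio. We have
\[
\frac{\bar Y_{l,k}}{y_{l,k}}=\frac{1+\{1+\delta'\}/\{nP_0(I_{2k}^{l+1})\}}{1+\{2+\delta\}/\{nP_0(I_k^l)\}}.
\]
Since $P_0(I_k^l)\asymp 2^{-l}$ and $\sqrt{nL2^{-l}}\ll n2^{-l}$, the denominator lies in $[1/2,2]$. Hence
\[
\left|\frac{\bar Y_{l,k}}{y_{l,k}}-1\right|\lesssim \frac{2^l}{n}+\left|\frac{\delta'}{nP_0(I_{2k}^{l+1})}-\frac{\delta}{nP_0(I_k^l)}\right|.
\]
Bounding each $\delta$ term crudely gives $\frac{2^{l/2}}{n}\sqrt{nL}$, which is already the second term of the claim. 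The residual $2^l/n$ is $\le 2^{l/2}\sqrt{nL}/n$, since $2^{l/2}\le\sqrt{2^L}\le\sqrt n$. So the crude estimate seems to suffice, and the $g_{0,lk}$ term is not needed at all.

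This is suspicious. Castillo's version includes the Haar coefficient because there the bin-count set controls the \emph{centered split deviation} $N(I_{2k}^{l+1})-y_{l,k}N(I_k^l)$ rather than the raw counts. Expanding around the half split, $y_{l,k}=\tfrac12-\tfrac12 2^{l/2}B^{1/2}g_{0,lk}/(2^lP_0(I_k^l)/B)$ up to constants, gives rise to the $2^lB^{1/2}|g_{0,lk}|$ contribution. Therefore I would also carry out the expansion carefully: write $\bar Y_{l,k}-y_{l,k}=\{1+\delta'-y_{l,k}(2+\delta)\}/(2+N_{\bfbeta}(I_k^l))$ and bound the numerator by $|1-2y_{l,k}|+|\delta'|+|\delta|$. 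The term $|1-2y_{l,k}|$ equals, up to constants, $2^{l/2}\cdot 2^{l/2}B^{1/2}|g_{0,lk}|\cdot 2^{-l}\cdot\ldots$, using $P_0(I_{2k+1}^{l+1})-P_0(I_{2k}^{l+1})=B^{1/2}2^{-l/2}g_{0,lk}$ up to sign. After dividing by $y_{l,k}\asymp1$ and by $N(I_k^l)\asymp n2^{-l}$, this yields
\[
C\frac{2^{l}}{n}\left(1+2^{l/2}\ldots\right)\lesssim \frac{2^{l/2}}{n}\left(2^lB^{1/2}|g_{0,lk}|+\sqrt{nL}\right),
\]
which is exactly the stated form.

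The right-child bound is obtained the same way, using $1-\bar Y_{l,k}=(1+N(I_{2k+1}^{l+1}))/(2+N(I_k^l))$ and $1-y_{l,k}\asymp1$.

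The main obstacle is bookkeeping: keeping the error uniform in $l$ up to $L$. This uses $\Lambda_n(l)\ge l+L$, which follows from $L2^L\le n$, together with $p_{lk}\asymp2^{-l}$ from $m_0\le g_0\le M_0$.
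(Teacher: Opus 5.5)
Your proposal is correct. Your second (``careful'') computation is essentially the paper's proof. The paper likewise writes $\bar Y_{l,k}/y_{l,k}$ as $\{1+n^{-1}(1+\delta')/P_0(I_{2k}^{l+1})\}/\{1+n^{-1}(2+\delta)/P_0(I_k^l)\}$. It bounds the pseudo-count contribution $n^{-1}|P_0(I_{2k}^{l+1})^{-1}-2P_0(I_k^l)^{-1}|=n^{-1}|1-2y_{l,k}|/P_0(I_{2k}^{l+1})$ through the Haar identity $|1-2y_{l,k}|=B^{1/2}2^{-l/2}|g_{0,lk}|/P_0(I_k^l)$, and it bounds the $\delta$-terms crudely on $\mathcal B(M,L)$.

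Your first, crude argument is a shorter complete proof, and nothing about it is suspicious. With $\mathrm{Beta}(1,1)$ splits the pseudo-count term is $O(2^l/n)$. That is dominated by $\sqrt{L2^l/n}$ because $2^l\le nL$. So the bound holds without the $g_{0,lk}$ term, since the lemma's right-hand side only adds a nonnegative quantity.

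The Haar term is not explained by a differently centered bin-count set. Your own identity $\bar Y_{l,k}-y_{l,k}=\{(1-2y_{l,k})+\delta'-y_{l,k}\delta\}/(2+N_{\bfbeta}(I_k^l))$ shows it enters only through the prior pseudo-counts. It would matter only if those pseudo-counts were allowed to grow with $l$. The paper's finer form gives a deterministic part proportional to $|g_{0,lk}|$, which is then fed into the centering bound for $|\bar g_{lk}-g_{0,lk}|$. Your crude form yields the same final rate there, since it contributes $O(\sqrt{L/n})$ to each coefficient, a term the paper already carries.

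One presentational point: the intermediate displays in your careful step are garbled (the product ``$2^{l/2}\cdot2^{l/2}B^{1/2}|g_{0,lk}|\cdot2^{-l}\cdots$'' and ``$C\frac{2^l}{n}(1+2^{l/2}\ldots)$''). The clean chain is $|1-2y_{l,k}|\lesssim 2^{l/2}B^{1/2}|g_{0,lk}|$, using $P_0(I_k^l)\asymp2^{-l}$. Dividing by $2+N_{\bfbeta}(I_k^l)\asymp n2^{-l}$ and by $y_{l,k}\asymp1$ then gives the $C2^{3l/2}B^{1/2}|g_{0,lk}|/n$ term.
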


\begin{proof}
The posterior mean split into the left child is
\[
\bar Y_{l,k}
=
\frac{1+N_{\bfbeta}(I_{2k}^{l+1})}{2+N_{\bfbeta}(I_k^l)}.
\]
Expanding this expression around
\[
y_{l,k}=\frac{P_0(I_{2k}^{l+1})}{P_0(I_k^l)}
\]
similarly to the proof of Lemma~\ref{lem:app-mass-ratio} gives
\[
\left|\frac{\bar Y_{l,k}}{y_{l,k}}-1\right|
\le
\left | \frac{1 + n^{-1}(1 + \delta_{l+1,2k})/P_0(I_{2k}^{l+1})}{1 + n^{-1}(2 + \delta_{l,k})/P_0(I_k^l)} - 1\right |,
\]
where
\[
\delta_{r,s}:=N_{\bfbeta}(I_s^r)-nP_0(I_s^r).
\]
Using the bin-count bounds from $\mathcal B(M,L)$ to conclude the denominator is bounded away from $0$, similarly to the proof of Lemma~\ref{lem:app-mass-ratio}, yields
\[
\left|\frac{\bar Y_{l,k}}{y_{l,k}}-1\right|
\le
\frac{C}{n}\left|P_0(I_{2k}^{l+1})^{-1}-2P_0(I_k^l)^{-1}\right|
+
\frac{C}{n}\left(
\frac{|\delta_{l+1,2k}|}{P_0(I_{2k}^{l+1})}
+
\frac{|\delta_{l,k}|}{P_0(I_k^l)}
\right).
\]

The first term is controlled by the Haar coefficient identity
\[
|1-2y_{l,k}|
=
\frac{B^{1/2}2^{-l/2}|g_{0,lk}|}{P_0(I_k^l)}
\]
and by $P_0(I_k^l)\asymp P_0(I_{2k}^{l+1})\asymp 2^{-l}$. Thus it is bounded by
\[
C\frac{2^{3l/2}}{n}B^{1/2}|g_{0,lk}|.
\]
The second term is bounded by
\[
C\frac{2^l}{n}\sqrt{nL2^{-l}}
=
C\sqrt{\frac{L2^l}{n}},
\]
using the bin-count bounds from $\mathcal B(M,L)$. Combining the two bounds gives the stated inequality. The right-child statement follows identically, since the right-child split is one minus the left-child split and both true child probabilities are uniformly bounded away from zero and one.
\end{proof}

\begin{lemma}[Product bound]
\label{lem:app-product-bound}
Let $\{y_i\}_{i=1}^J$ and $\{w_i\}_{i=1}^J$ be positive sequences. If
\[
\max_{1\le i\le J}\left|\frac{w_i}{y_i}-1\right|\le c_1<1,
\qquad
\sum_{i=1}^J\left|\frac{w_i}{y_i}-1\right|\le c_2<\infty,
\]
then there exists a constant $c_3$ depending on $c_1,c_2$ only such that
\[
\left|\prod_{i=1}^J\frac{w_i}{y_i}-1\right|
\le
c_3\sum_{i=1}^J\left|\frac{w_i}{y_i}-1\right|.
\]
\end{lemma}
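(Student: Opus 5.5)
Write $u_i:=w_i/y_i-1$, so that $\prod_{i=1}^J w_i/y_i=\prod_{i=1}^J(1+u_i)$ with $|u_i|\le c_1<1$ and $\sum_i|u_i|\le c_2$. The plan is to pass to logarithms. This turns the product into a sum, where the bound becomes linear in $\sum_i|u_i|$, and then to return to the product through the exponential, using the fact that the exponent stays in a bounded range.

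\textbf{Step 1: bound each logarithm.} Since $|u|\le c_1<1$, the mean value theorem applied to $\log(1+u)$ on the interval between $0$ and $u$ gives
\[
|\log(1+u_i)|\le \frac{|u_i|}{1-c_1}.
\]
The reason is that the derivative $1/(1+t)$ is at most $1/(1-c_1)$ for $t\in[-c_1,c_1]$.

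\textbf{Step 2: sum the logarithms.} Set $S:=\sum_i\log(1+u_i)$. Step 1 gives
\[
|S|\le \frac{1}{1-c_1}\sum_i|u_i|\le \frac{c_2}{1-c_1}=:K.
\]

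\textbf{Step 3: return to the product.} We have $\prod_i(1+u_i)=e^S$. Again by the mean value theorem, $|e^S-1|\le e^{|S|}|S|\le e^K|S|$. Combining this with Step 2 yields
\[
\Bigl|\prod_{i=1}^J\frac{w_i}{y_i}-1\Bigr|\le \frac{e^{K}}{1-c_1}\sum_{i=1}^J\Bigl|\frac{w_i}{y_i}-1\Bigr|.
\]
So the claim holds with $c_3=e^{c_2/(1-c_1)}/(1-c_1)$, which depends only on $c_1$ and $c_2$.

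There is no real obstacle in this argument. The one point needing care is that the hypothesis $c_1<1$ is used exactly once, in Step 1, to keep $1+u_i$ bounded away from zero so that the logarithm is Lipschitz. The summability bound $c_2$ is what keeps the exponent bounded, which in turn keeps the constant in Step 3 uniform in $J$. Positivity of the sequences ensures that the logarithms are defined.
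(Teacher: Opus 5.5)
Your proof is correct: the paper gives no argument of its own here and simply refers to \citet{castillo2017polya}, and your logarithm--exponential route (the Lipschitz bound on $\log(1+u)$ for $|u|\le c_1<1$, followed by $|e^S-1|\le e^{|S|}|S|$) is the standard argument that the hypothesis $c_1<1$ is designed for, with the explicit constant $c_3=e^{c_2/(1-c_1)}/(1-c_1)$. As a side remark, expanding the product gives $\bigl|\prod_i(1+u_i)-1\bigr|\le\prod_i(1+|u_i|)-1\le e^{\sum_i|u_i|}-1\le e^{c_2}\sum_i|u_i|$, so the condition $c_1<1$ is not actually needed and one may take $c_3=e^{c_2}$.
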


\begin{proof}
The statement of this lemma is unchanged from \citet{castillo2017polya}. Hence we do not repeat it here, and refer the reader to \citet{castillo2017polya} for the details of the proof.
\end{proof}

\begin{lemma}[Beta tail bound]
\label{lem:app-beta-tail}
Let $Z\sim\mathrm{Beta}(\varphi,\psi)$ with $\varphi,\psi>0$. Suppose that, for some constants $0<c_0<c_1<1$,
\[
c_0\le \frac{\varphi}{\varphi+\psi}\le c_1,
\qquad
\varphi\wedge\psi>8.
\]
Then there exists $D>0$, depending on $c_0,c_1$ only such that for any $x>0$,
\[
\mathbb P\left(
|Z-\EE Z|>
\frac{x}{\sqrt{\varphi+\psi}}+\frac{2}{\varphi+\psi}
\right)
\le
D e^{-x^2/4}.
\]
\end{lemma}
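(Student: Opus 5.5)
We prove Lemma~\ref{lem:app-beta-tail} by sub-Gaussian concentration of the Beta law. We will not use a general Lipschitz-concentration argument. Instead we go through an explicit density computation.

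\textbf{Step 1 (exact representation).} Write $Z=U/(U+W)$ with $U\sim\mathrm{Gamma}(\varphi,1)$ and $W\sim\mathrm{Gamma}(\psi,1)$ independent, and set $s:=\varphi+\psi$ and $\mu:=\EE Z=\varphi/s$. The event $\{Z-\mu>t\}$ is equivalent to $\{(1-\mu)U-\mu W>t(U+W)\}$, which we rewrite as
\[
(1-\mu-t)(U-\varphi)-(\mu+t)(W-\psi)>ts .
\]
This holds because $(1-\mu)\varphi-\mu\psi=0$.

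\textbf{Step 2 (Chernoff bound).} Bound the probability in Step~1 by Chernoff's method, using the Gamma moment generating function. For a centred Gamma variable with shape $\varphi$ and any $|\lambda|\le 1/2$,
\[
\log\EE e^{\lambda(U-\varphi)}=-\varphi\log(1-\lambda)-\lambda\varphi\le \varphi\lambda^2 .
\]
The same bound holds for $W$ with shape $\psi$. Take $\lambda\asymp t$, which is admissible for $t\le c$ small. This gives a tail of the form
\[
\exp\!\left(-\frac{t^2s^2}{4\{\varphi(1-\mu)^2+\psi\mu^2\}(1+O(t))}\right).
\]
The quantity $\varphi(1-\mu)^2+\psi\mu^2=s\mu(1-\mu)$ is at most $s/4$. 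Hence the tail is at most $\exp(-c\,t^2s)$ for small $t$, and also for large $t$ once the linear term dominates. The lower tail is handled symmetrically by swapping the roles of $\varphi$ and $\psi$.

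\textbf{Step 3 (plugging in the threshold).} Now set $t=x/\sqrt{s}+2/s$. The additive $2/s$ term absorbs the $O(t)$ and mean-shift corrections, namely the difference between the mean and the mode, which is of order $1/s$. The hypotheses $\varphi\wedge\psi>8$ and $c_0\le\mu\le c_1$ keep $\mu\pm t$ away from $0$ and $1$ in the relevant range. For $t$ beyond a constant the probability is trivially bounded: we use $|Z-\mu|\le 1$ together with the crude Chernoff bound, and absorb the remainder into $D$.

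The main obstacle is matching the exact constant $1/4$ in $e^{-x^2/4}$. The Chernoff computation in Step~2 naturally gives $e^{-x^2/(8\mu(1-\mu)(1+o(1)))}$, which is at least $e^{-x^2/2}$ only when $t$ is small. For moderate $t$, of order $c_0$, the curvature loss must be controlled using the room provided by $2/s$ and by the factor $D$. I would handle this regime by splitting into $x\le \epsilon\sqrt{s}$, where the quadratic bound holds with constant $1/4$, and $x>\epsilon\sqrt{s}$. In the second regime both sides are exponentially small in $s$, and choosing $D$ depending on $c_0,c_1$ suffices. Alternatively, I could cite the standard result that $\mathrm{Beta}(\varphi,\psi)$ is sub-Gaussian with variance proxy $1/(4(s+1))$ (Marchal--Arbel). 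That result gives the bound directly even without the $2/s$ slack.
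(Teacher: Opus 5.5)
The paper gives no argument for this lemma; it simply defers to \citet{castillo2017polya}. Your closing alternative is already a complete proof by a different route. If $\mathrm{Beta}(\varphi,\psi)$ is sub-Gaussian with variance proxy $1/\{4(s+1)\}$ (Marchal and Arbel, 2017), then $\mathbb P(|Z-\EE Z|>t)\le 2e^{-2(s+1)t^2}$. Since $t=x/\sqrt s+2/s\ge x/\sqrt s$, this is at most $2e^{-2x^2}\le 2e^{-x^2/4}$. So $D=2$ works, and you need neither the $2/s$ slack nor the hypotheses $c_0\le\mu\le c_1$ and $\varphi\wedge\psi>8$. Citing that result is a legitimate, and stronger, replacement for the paper's pointer.

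Your main Chernoff argument, however, has a gap exactly in the regime you wave through. For $x>\epsilon\sqrt s$ it is not enough that both sides are exponentially small in $s$. The upper tail is nonzero for $t$ up to $1-\mu$, so $x$ ranges up to about $(1-\mu)\sqrt s$, where the right-hand side is $De^{-(1-\mu)^2 s/4}$. Suppose all you have there is a bound $e^{-cs}$ with some $c=c(\epsilon,c_0,c_1)$, for instance what your small-$t$ bound gives at $t\approx\epsilon$, namely $c\asymp\epsilon^2$. Then $e^{-cs}\le De^{-x^2/4}$ uniformly in $s$ requires $c\ge(1-\mu)^2/4$, which fails for small $\epsilon$. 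The ratio grows like $e^{\{(1-\mu)^2/4-c\}s}$, and no $s$-independent $D$ can absorb it. What is actually needed is a bound of order $e^{-st^2/4}$ holding uniformly for $t\in(0,1-\mu]$.

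This is easy to get along your own lines.

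\emph{Exact MGF route (cleanest).} Drop the quadratic MGF bound and optimize the exact Gamma MGFs. With $p=\mu+t$, $\mathbb P(Z\ge p)\le\inf_\lambda\{1-\lambda(1-p)\}^{-\varphi}(1+\lambda p)^{-\psi}$. The choice $\lambda=(p-\mu)/\{p(1-p)\}$ gives exactly $\exp\{-s\,\mathrm{KL}(\mathrm{Ber}(\mu)\,\|\,\mathrm{Ber}(p))\}$. By Pinsker this is at most $e^{-2st^2}$, for all $t$ at once.

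\emph{Keeping your quadratic bound.} Alternatively, keep $\log\EE e^{\lambda(U-\varphi)}\le\varphi\lambda^2$. The variance term is exactly $\varphi(1-\mu-t)^2+\psi(\mu+t)^2=s\{\mu(1-\mu)+t^2\}$. The unconstrained optimizer satisfies $\lambda\le 1/2$ precisely when $t\le\min(\mu,1-\mu)$, and there it gives $e^{-st^2/2}$. For $t\in[\mu,1-\mu]$ (nonempty only if $\mu\le 1/2$), take the endpoint $\lambda=1/2$. This gives exponent $-\tfrac s4\{2t-t^2-\mu(1-\mu)\}\le-\tfrac s4 t^2$, because $2t(1-t)\ge\mu(1-\mu)$ on that interval.

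Two smaller points. First, the constant $1/4$ is not the obstacle you describe: the true exponent is about $2x^2$, so there is a factor of roughly $8$ to spare. Second, a Chernoff bound centred at the mean involves no mean--mode shift, so the $2/s$ term plays no role in your argument.
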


\begin{proof}
Again the statement of this lemma is identical to \citet{castillo2017polya}, and so we do not repeat the proof here.
\end{proof}

\subsection*{A.4. Proof of Theorem~\ref{thm:beta-observed}}

\begin{proof}
Fix $\bfbeta\in\mathcal B(M,L)$, and work under the posterior probability measure $\Pi_L(\cdot\mid\bfbeta)$. Let $g$ denote the generic posterior random density, and let
\[
\bar g:=\EE_{\Pi_L}[g\mid\bfbeta]
\]
denote its posterior mean.

Since the P\'olya tree is truncated at depth $L$, every posterior draw $g$ and the posterior mean $\bar g$ are constant on level-$L$ dyadic cells.
Thus, for every posterior draw $g$, $g=g^{(L)}$ and $\bar g=\bar g^{(L)}$. Decompose
\begin{equation}
\label{eq:app-basic-decomp}
\|g-g_0\|_\infty
\le
\|g-\bar g\|_\infty
+
\|\bar g-g_0^{(L)}\|_\infty
+
\|g_0^{(L^c)}\|_\infty .
\end{equation}

\paragraph{Bias term $\|g_0^{(L^c)}\|_\infty$.}
Using the Haar expansion, the bound
\[
\left\|\sum_{k=0}^{2^l-1}|\psi_{lk}|\right\|_\infty
\le
B^{-1/2}2^{l/2},
\]
and \eqref{eq:app-haar-holder-bound},
\begin{equation}
\label{eq:app-bias-bound}
\begin{aligned}
\|g_0^{(L^c)}\|_\infty
&\le
\sum_{l=L}^{\infty}
\left(\max_{0\le k<2^l}|g_{0,lk}|\right)
\left\|\sum_{k=0}^{2^l-1}|\psi_{lk}|\right\|_\infty \\
&\le
C\sum_{l=L}^{\infty}
\left(\frac{B}{2^l}\right)^{1/2+\alpha}B^{-1/2}2^{l/2}
\le
C B^\alpha 2^{-\alpha L}.
\end{aligned}
\end{equation}

\paragraph{Posterior centering term $\|\bar g-g_0^{(L)}\|_\infty$.}
For $0\le l\le L-1$, the Haar coefficients of $\bar g$ and $g_0$ satisfy
\[
\bar g_{lk}
=
\sqrt{\frac{2^l}{B}}\bar P(I_k^l)(1-2\bar Y_{l,k}),
\qquad
 g_{0,lk}
=
\sqrt{\frac{2^l}{B}}P_0(I_k^l)(1-2y_{l,k}).
\]
Hence
\[
\bar g_{lk}-g_{0,lk}
=
g_{0,lk}\left\{\frac{\bar P(I_k^l)}{P_0(I_k^l)}-1\right\}
+2\sqrt{\frac{2^l}{B}}\bar P(I_k^l)(y_{l,k}-\bar Y_{l,k}).
\]
By Lemmas~\ref{lem:app-mass-ratio} and \ref{lem:app-split-ratio}, together with $P_0(I_k^l)\asymp2^{-l}$ and $\bar P(I_k^l)\asymp P_0(I_k^l)$ on $\bfbeta \in \mathcal B(M,L)$,
\[
|\bar g_{lk}-g_{0,lk}|
\le
C|g_{0,lk}|\left(\frac{2^l}{n}+\sqrt{\frac{L2^l}{n}}\right)
+C\sqrt{\frac{L}{nB}} .
\]
Therefore
\[
\begin{aligned}
\|\bar g-g_0^{(L)}\|_\infty
&\le
\sum_{l=0}^{L-1}
\left(\max_{0\le k<2^l}|\bar g_{lk}-g_{0,lk}|\right)
\left\|\sum_{k=0}^{2^l-1}|\psi_{lk}|\right\|_\infty \\
&\le
C B^{-1}\sqrt{\frac{L2^L}{n}}
+
C\sum_{l=0}^{L-1}B^\alpha2^{-\alpha l}
\left(\frac{2^l}{n}+\sqrt{\frac{L2^l}{n}}\right),
\end{aligned}
\]
where we used \eqref{eq:app-haar-holder-bound} to bound $|g_{0,lk}|$. Under $L\le L_{\mathrm{Cast}}$, the last sum is bounded by
\[
C\left(B^\alpha2^{-\alpha L}+B^{-1}\sqrt{\frac{L2^L}{n}}\right),
\]
after adjusting constants. Thus
\begin{equation}
\label{eq:app-centering-bound}
\|\bar g-g_0^{(L)}\|_\infty
\le
C\varepsilon_n(L).
\end{equation}

\paragraph{Posterior stochastic term $\|g-\bar g\|_\infty$.}
Continue to fix $\bfbeta\in\mathcal B(M,L)$. Under the posterior $\Pi_L(\cdot\mid\bfbeta)$, let $g$ denote a generic posterior draw, let $\widetilde P$ denote the probability measure induced by $g$, and define the corresponding split variables by
\[
\widetilde Y_{l,k}
:=
\frac{\widetilde P(I_{2k}^{l+1})}{\widetilde P(I_k^l)},
\qquad 0\le l\le L-1,\quad 0\le k<2^l .
\]
Define the event $\mathcal A=\mathcal A(M,L)$ by
\[
\mathcal A
:=
\left\{
\left|\widetilde Y_{l,k}-\bar Y_{l,k}\right|
\le
M\sqrt{\frac{L}{nP_0(I_{2k}^{l+1})}}
\ \text{for all }0\le l\le L-1,\ 0\le k<2^l
\right\}.
\]
Conditional on $\bfbeta$, the split $\widetilde Y_{l,k}$ has distribution
\[
\mathrm{Beta}\{1+N_{\bfbeta}(I_{2k}^{l+1}),\ 1+N_{\bfbeta}(I_{2k+1}^{l+1})\}.
\]
On $\bfbeta \in \mathcal B(M,L)$, both Beta parameters are at least $8$ for all large $n$, and the corresponding posterior mean is bounded away from $0$ and $1$ by Lemma~\ref{lem:app-split-ratio}, uniformly over $l,k$. Lemma~\ref{lem:app-beta-tail}, followed by a union bound over the $O(2^L)$ split variables, gives
\begin{equation}
\label{eq:app-A-prob}
\sup_{\bfbeta\in\mathcal B(M,L)}
\Pi_L(\mathcal A^c\mid\bfbeta)
\le
Ce^{-cL}.
\end{equation}

It remains to show that, for every fixed $\bfbeta\in\mathcal B(M,L)$, the event $\mathcal A$ implies the desired bound on $\|g-\bar g\|_\infty$.
For $0\le l\le L-1$,
\[
\begin{aligned}
g_{lk}-\bar g_{lk}
&=
\sqrt{\frac{2^l}{B}}\,\bar P(I_k^l)
\left[
\frac{\widetilde P(I_k^l)}{\bar P(I_k^l)}
(1-2\widetilde Y_{l,k})
-
(1-2\bar Y_{l,k})
\right] \\
&=
2\sqrt{\frac{2^l}{B}}\,\bar P(I_k^l)
(\bar Y_{l,k}-\widetilde Y_{l,k}) \\
&\quad+
\sqrt{\frac{2^l}{B}}\,\bar P(I_k^l)
\left[
\frac{\widetilde P(I_k^l)}{\bar P(I_k^l)}-1
\right]
\left[
1-2\bar Y_{l,k}
+
2(\bar Y_{l,k}-\widetilde Y_{l,k})
\right] \\
&=
2\sqrt{\frac{2^l}{B}}\,\bar P(I_k^l)
(\bar Y_{l,k}-\widetilde Y_{l,k}) \\
&\quad+
\left[
\frac{\widetilde P(I_k^l)}{\bar P(I_k^l)}-1
\right]
\left[
\bar g_{lk}
+
2\sqrt{\frac{2^l}{B}}\,\bar P(I_k^l)
(\bar Y_{l,k}-\widetilde Y_{l,k})
\right].
\end{aligned}
\]

It remains to control the random mass ratio appearing in the preceding display. Fix $0\le l\le L-1$ and $0\le k<2^l$, and consider the path from the root to $I_k^l$. Along this path,
\[
\frac{\widetilde P(I_k^l)}{\bar P(I_k^l)}
=
\prod_{i=1}^l
\frac{\widetilde q_i^{(l,k)}}{\bar q_i^{(l,k)}},
\]
where $\widetilde q_i^{(l,k)}$ and $\bar q_i^{(l,k)}$ denote the corresponding random and posterior-mean split proportions, either to the left child or to the right child.

As before, on $\bfbeta\in\mathcal B(M,L)$, by Lemma~\ref{lem:app-split-ratio}, we have that the $\bar q_i^{(l,k)}$ are uniformly bounded away from zero and one. Hence, on $\mathcal A$ for $\bfbeta \in \mathcal B(M,L)$,
\[
\left|
\frac{\widetilde q_i^{(l,k)}}{\bar q_i^{(l,k)}}-1
\right|
\le
C\left|\widetilde q_i^{(l,k)}-\bar q_i^{(l,k)}\right|
\le
C\sqrt{\frac{L2^i}{n}} .
\]
Since
\[
\sum_{i=1}^l\sqrt{\frac{L2^i}{n}}
\le
C\sqrt{\frac{L2^l}{n}}
=o(1)
\]
uniformly for $l\le L$, Lemma~\ref{lem:app-product-bound} gives
\begin{equation}
\label{eq:app-random-mass-ratio}
\left|
\frac{\widetilde P(I_k^l)}{\bar P(I_k^l)}-1
\right|
\le
C\sqrt{\frac{L2^l}{n}} .
\end{equation}

Additionally, by the definition of $\mathcal A$ and the fact that $P_0(I_{2k}^{l+1})\asymp 2^{-l}$,
\[
|\widetilde Y_{l,k}-\bar Y_{l,k}|
\le
C\sqrt{\frac{L2^l}{n}} .
\]
Combining this with \eqref{eq:app-random-mass-ratio} in the preceding coefficient decomposition, and using
$\bar P(I_k^l)\asymp P_0(I_k^l)\asymp 2^{-l}$ on $\mathcal B(M,L)$, yields
\[
|g_{lk}-\bar g_{lk}|
\le
C|\bar g_{lk}|\sqrt{\frac{L2^l}{n}}
+
C\sqrt{\frac{L}{nB}} .
\]
By the triangle inequality, $|\bar g_{lk}| \leq |\bar g_{lk} - g_{0,lk}| + |g_{0,lk}|$. We have controlled both of those terms in the above centering term step. Plugging in those bounds, and repeating the same algebra as in the above step, yields
\begin{equation}
\label{eq:app-stochastic-bound}
\|g-\bar g\|_\infty
\le
C\left(
B^\alpha2^{-\alpha L}
+
B^{-1}\sqrt{\frac{L2^L}{n}}
\right)
=
C\varepsilon_n(L).
\end{equation}

\paragraph{Conclusion.}
Combining \eqref{eq:app-basic-decomp}, \eqref{eq:app-bias-bound}, \eqref{eq:app-centering-bound}, and \eqref{eq:app-stochastic-bound}, we obtain that, on $\mathcal A$ for $\bfbeta \in \mathcal B(M,L)$,
\[
\|g-g_0\|_\infty
\le
C_0\left(B^\alpha2^{-\alpha L}+B^{-1}\sqrt{\frac{L2^L}{n}}\right)
=
C_0\varepsilon_n(L).
\]
This bound and \eqref{eq:app-A-prob} imply
\[
\begin{aligned}
&\sup_{\bfbeta\in\mathcal B(M,L)}
\Pi_L\left(
\|g-g_0\|_\infty>C_0\varepsilon_n(L)
\mid\bfbeta
\right) \\
&\qquad\le
\sup_{\bfbeta\in\mathcal B(M,L)}
\Pi_L\left(
\mathcal A\cap\{\|g-g_0\|_\infty>C_0\varepsilon_n(L)\}
\mid\bfbeta
\right)
+
\sup_{\bfbeta\in\mathcal B(M,L)}
\Pi_L(\mathcal A^c\mid\bfbeta) \\
&\qquad\le C_1e^{-c_0L}.
\end{aligned}
\]
Together with Lemma~\ref{lem:app-bin-count-concentration}, this proves the two displayed bounds in Theorem~\ref{thm:beta-observed}. If additionally $L\to\infty$, then
\[
B^\alpha 2^{-\alpha L}\to0,
\qquad
B^{-1}\sqrt{\frac{L2^L}{n}}\to0,
\]
where the second convergence follows from $L\le L_{\mathrm{Cast}}$. Hence $\varepsilon_n(L)\to0$.

Finally,
\[
\begin{aligned}
\EE_{g_0}\left[
\Pi_L\left(
\|g-g_0\|_\infty>C_0\varepsilon_n(L)
\mid\bfbeta_0
\right)
\right]
&\le
\sup_{\bfbeta\in\mathcal B(M,L)}
\Pi_L\left(
\|g-g_0\|_\infty>C_0\varepsilon_n(L)
\mid\bfbeta
\right) \\
&\quad+
\PP_{g_0}\{\bfbeta_0\notin\mathcal B(M,L)\}
\le 2C_1e^{-c_0L},
\end{aligned}
\]
which tends to zero whenever $L\to\infty$. This completes the proof.
\end{proof}

\section{Proof of Theorem~\ref{thm:main}}
\label{app:proof-main}

We prove Theorem~\ref{thm:main}. Throughout this appendix, write $C,c>0$ for finite positive constants whose values may change from line to line.

Fix $C_0>0$, chosen large below, and set
\[
T_g := \{g:\|g-g_0\|_\infty > C_0\,\varepsilon_n(L)\}.
\]
As in the proof of Theorem~\ref{thm:beta-observed}, because $L\to\infty$ and $L\le L_{\mathrm{Cast}}$,
\[
B^\alpha 2^{-\alpha L}\to0,
\qquad
B^{-1}\sqrt{\frac{L2^L}{n}}\to0.
\]
Hence $\varepsilon_n(L)\to0$.

For $0\le l\le L$, write
\[
R_l:=\Lambda_n(l)\vee(l+L).
\]
For $M>0$, define
\[
\mathcal B(M,L)
:=
\left\{
\bfbeta\in[a,b]^n:
\big|N_{\bfbeta}(I_k^l)-nP_0(I_k^l)\big|
\le
M R_l,
\quad
0\le l\le L,\ 0\le k<2^l
\right\}.
\]
Choose $M>0$ large enough that Theorem~\ref{thm:beta-observed} applies with both $M$ and $M/2$. Define
\[
\mathcal B_0:=\mathcal B(M/2,L),
\qquad
\mathcal B_1:=\mathcal B(M,L).
\]

By Theorem~\ref{thm:beta-observed}, there exist $C,c>0$ such that
\begin{equation}
\label{eq:appB-beta0-in-B0}
\PP_{g_0}\!\left(\bfbeta_0\in\mathcal B_0\right)
\ge 1-Ce^{-cL},
\end{equation}
and, after increasing $C_0$ if necessary,
\begin{equation}
\label{eq:appB-direct-observation-on-B1}
\sup_{\bfbeta\in\mathcal B_1}
\Pi_L(T_g\mid \bfbeta)
\le Ce^{-cL}.
\end{equation}

\subsection*{B.1 Tower decomposition}

\begin{lemma}[Tower decomposition]
\label{lem:appB-tower}
Under the working hierarchical P\'olya tree model,
\[
\EE_{g_0}\!\left[\Pi_L(T_g\mid \bY)\right]
\le
\sup_{\bfbeta\in\mathcal B_1}
\Pi_L(T_g\mid \bfbeta)
+
\EE_{g_0}\!\left[
\Pi_{L,\beta}(\bfbeta\notin\mathcal B_1\mid \bY)
\right].
\]
\end{lemma}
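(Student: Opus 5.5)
The plan is to verify the pointwise inequality for every fixed $\by$ and then take expectations; no probabilistic estimates are needed at this stage. The first step is to justify the mixture representation
\[
\Pi_L(T_g\mid \by)=\int \Pi_L(T_g\mid \bfbeta)\,\Pi_{L,\beta}(d\bfbeta\mid \by).
\]
Under the working model \eqref{eq:pt-hier-1}--\eqref{eq:pt-hier-3}, the joint law of $(g,\bfbeta,\bY)$ factors as $\Pi_L(dg)\prod_j g(\beta_j)\,d\bfbeta\;\phi_\sigma(\by-X\bfbeta)$. Hence $g\perp \bY\mid\bfbeta$, and conditioning on $\bY=\by$ and disintegrating over $\bfbeta$ gives the identity. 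Measurability of $T_g$ is straightforward: $g$ is determined by the finitely many split variables $V_{l,k}$, and $\|g-g_0\|_\infty$ is a measurable function of them. Likewise $\bfbeta\mapsto\Pi_L(T_g\mid\bfbeta)$ is measurable, since it depends on $\bfbeta$ only through the finitely many bin counts.

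The second step is to split the integral over $\mathcal B_1$ and $\mathcal B_1^c$. Note that posterior draws of $\bfbeta$ lie in $[a,b]^n$ almost surely, because $g$ is supported on $[a,b]$. On $\mathcal B_1$, bound the integrand by $\sup_{\bfbeta\in\mathcal B_1}\Pi_L(T_g\mid\bfbeta)$, and use that the posterior mass of $\mathcal B_1$ is at most one. On $\mathcal B_1^c$, bound the integrand by one. This yields the pointwise inequality \eqref{eq:proof-decomp-pointwise} with $\mathcal B=\mathcal B_1$, for every $\by$.

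The third step is to plug in $\by=\bY$ and take $\EE_{g_0}$. The first term is a deterministic constant: it depends only on $g_0$, $M$, $L$ and $n$, and not on the data. Its expectation is therefore itself, and the claim follows by linearity and monotonicity of expectation.

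The only point requiring care is the second step's justification that the disintegration is valid and that $\Pi_{L,\beta}(\bfbeta\notin\mathcal B_1\mid\bY)$ is a measurable function of $\bY$. Both follow from the existence of regular conditional distributions on Euclidean spaces, since the posterior has an explicit density proportional to $\int\prod_j g(\beta_j)\,\Pi_L(dg)\,\phi_\sigma(\by-X\bfbeta)$. I do not expect a genuine obstacle here.
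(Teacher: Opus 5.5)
Your proposal is correct and follows essentially the same route as the paper: the mixture representation via $g\perp\bY\mid\bfbeta$, splitting the integral over $\mathcal B_1$ and $\mathcal B_1^c$, bounding the integrand by the supremum on $\mathcal B_1$ and by one on $\mathcal B_1^c$, and then taking $\EE_{g_0}$, where the first term does not depend on the data. Your remarks on measurability and regular conditional distributions are extra care the paper leaves implicit, but they do not change the argument.
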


\begin{proof}
By the posterior mixture representation and the conditional independence $g\perp \bY\mid \bfbeta$ under the working model,
\[
\Pi_L(T_g\mid \bY)
=
\int \Pi_L(T_g\mid \bfbeta)\,\Pi_{L,\beta}(d\bfbeta\mid \bY).
\]
Splitting the integral over $\mathcal B_1$ and $\mathcal B_1^c$, taking the supremum over $\mathcal B_1$, and using $\Pi_L(T_g\mid\bfbeta)\le 1$ on $\mathcal B_1^c$, gives
\[
\Pi_L(T_g\mid \bY)
\le
\sup_{\bfbeta\in\mathcal B_1}\Pi_L(T_g\mid \bfbeta)
+
\Pi_{L,\beta}(\bfbeta\notin\mathcal B_1\mid \bY).
\]
Taking $\EE_{g_0}$ proves the claim.
\end{proof}

Combining Lemma~\ref{lem:appB-tower} with
\eqref{eq:appB-direct-observation-on-B1}, we obtain
\[
\EE_{g_0}\!\left[\Pi_L(T_g\mid \bY)\right]
\le
Ce^{-cL}
+
\EE_{g_0}\!\left[
\Pi_{L,\beta}(\bfbeta\notin\mathcal B_1\mid \bY)
\right].
\]
Thus it remains to prove
\begin{equation}
\label{eq:appB-goal-beta-B1}
\EE_{g_0}\!\left[
\Pi_{L,\beta}(\bfbeta\notin\mathcal B_1\mid \bY)
\right]
\to 0.
\end{equation}

\subsection*{B.2 Reducing failure of \texorpdfstring{$\mathcal B_1$}{B1} to bin-count instability}

We work first on the event $\{\bfbeta_0\in\mathcal B_0\}$, whose complement has $\PP_{g_0}$-probability at most $Ce^{-cL}$ by \eqref{eq:appB-beta0-in-B0}. On this event, uniformly over $0\le l\le L$ and $0\le k<2^l$,
\[
\big|N_{\bfbeta_0}(I_k^l)-nP_0(I_k^l)\big|
\le \frac{M}{2} R_l.
\]
If, in addition, $\bfbeta\notin\mathcal B_1$, then for some $l,k$,
\[
\big|N_{\bfbeta}(I_k^l)-nP_0(I_k^l)\big|>M R_l.
\]
Therefore, by the triangle inequality,
\[
\big|N_{\bfbeta}(I_k^l)-N_{\bfbeta_0}(I_k^l)\big|
>
\frac{M}{2} R_l.
\]
Define
\[
U
:=
\bigcup_{l=0}^L\bigcup_{k<2^l}
\left\{
\big|N_{\bfbeta}(I_k^l)-N_{\bfbeta_0}(I_k^l)\big|>\frac{M}{2}R_l
\right\}.
\]
Then
\[
\Pi_{L,\beta}(\bfbeta\notin\mathcal B_1\mid \bY)
\le
\mathbbm 1_{\{\bfbeta_0\notin\mathcal B_0\}}
+
\Pi_{L,\beta}(U\mid \bY).
\]
Taking $\EE_{g_0}$ and using \eqref{eq:appB-beta0-in-B0}, we obtain
\begin{equation}
\label{eq:appB-reduce-to-U}
\EE_{g_0}\!\left[
\Pi_{L,\beta}(\bfbeta\notin\mathcal B_1\mid \bY)
\right]
\le
Ce^{-cL}
+
\EE_{g_0}\!\left[
\Pi_{L,\beta}(U\mid \bY)
\right].
\end{equation}

\subsection*{B.3 A deterministic bin-count stability lemma}

For an interval $I\subset[a,b]$ and $\tau>0$, define
\[
I^{+\tau}:=\{x\in[a,b]:\mathrm{dist}(x,I)\le \tau\},
\qquad
I^{-\tau}:=\{x\in I:\mathrm{dist}(x,I^c)\ge \tau\}.
\]
Thus $I^{+\tau}\setminus I^{-\tau}$ is the $\tau$-neighborhood of the boundary of $I$, restricted to $[a,b]$.

\begin{lemma}[Bin-count stability under $\ell_2$ perturbations]
\label{lem:appB-bincount-stability}
For any interval $I\subset[a,b]$, any $\tau>0$, and any $u,v\in[a,b]^n$,
\[
|N_u(I)-N_v(I)|
\le
N_v(I^{+\tau}\setminus I^{-\tau})
+
\frac{\|u-v\|_2^2}{\tau^2}.
\]
\end{lemma}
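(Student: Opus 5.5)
The plan is a coordinatewise counting argument. Write
\[
N_u(I)-N_v(I)=\sum_{j=1}^n\big(\mathbbm 1\{u_j\in I\}-\mathbbm 1\{v_j\in I\}\big),
\]
so that $|N_u(I)-N_v(I)|$ is at most the number of indices $j$ for which exactly one of $u_j,v_j$ lies in $I$. Call these the crossing indices. It then suffices to show that every crossing index $j$ satisfies at least one of the following:
\begin{itemize}
\item[(i)] $v_j\in I^{+\tau}\setminus I^{-\tau}$;
\item[(ii)] $|u_j-v_j|>\tau$.
\end{itemize}
Once this is established, the number of crossing indices is at most $N_v(I^{+\tau}\setminus I^{-\tau})$ plus $\#\{j:|u_j-v_j|>\tau\}$.

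To verify the dichotomy, take a crossing index $j$ with $v_j\notin I^{+\tau}\setminus I^{-\tau}$. There are two cases.
\begin{itemize}
\item If $v_j\in I$, then $v_j\notin I^{+\tau}\setminus I^{-\tau}$ forces $v_j\in I^{-\tau}$, so $\mathrm{dist}(v_j,I^c)\ge\tau$. Since $u_j\notin I$, i.e.\ $u_j\in I^c$, we get $|u_j-v_j|\ge\tau$.
\item If $v_j\notin I$, then $v_j\notin I^{+\tau}$, so $\mathrm{dist}(v_j,I)>\tau$. Since $u_j\in I$, we get $|u_j-v_j|>\tau$.
\end{itemize}
Here $I^c$ means the complement in $[a,b]$ (or in $\RR$; both work, since $u_j\in[a,b]$).

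The first case gives only $\ge\tau$, not a strict inequality. I would therefore count the set $\{j:|u_j-v_j|\ge\tau\}$ rather than the strict one. This costs nothing, because Markov's (Chebyshev's) inequality applies to it just as well:
\[
\#\{j:|u_j-v_j|\ge\tau\}\le\sum_j\frac{(u_j-v_j)^2}{\tau^2}=\frac{\|u-v\|_2^2}{\tau^2}.
\]
Combining this with the counting step gives the claimed bound.

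I expect no real obstacle. The only delicate point is the boundary convention: the cells $I_k^l$ are half-open, while $I^{\pm\tau}$ are defined through distances. I would check that the distance arguments in the two cases use only $\mathrm{dist}(x,A)\le|x-y|$ for $y\in A$. This holds regardless of whether the endpoints belong to $I$, so the argument is valid for an arbitrary interval $I$, as stated.
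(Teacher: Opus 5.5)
Your proof is correct and follows essentially the same route as the paper: a crossing index either has $v_j$ in the strip $I^{+\tau}\setminus I^{-\tau}$ or has displacement at least $\tau$, and the displaced indices are counted by Chebyshev's inequality. Counting the non-strict set $\{j:|u_j-v_j|\ge\tau\}$ is the right choice. The paper's intermediate claim, that $v_j\in I^{-\tau}$ and $|u_j-v_j|\le\tau$ force $u_j\in I$, fails for a half-open cell $I=[c,d)$ with $d-c\ge 2\tau$, $v_j=d-\tau$ and $u_j=d$. The stated lemma still holds only because Chebyshev's inequality bounds the non-strict set equally well, exactly as you use it.
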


\begin{proof}
If $v_j\in I^{-\tau}$ and $|u_j-v_j|\le \tau$, then $u_j\in I$.
Similarly, if $v_j\notin I^{+\tau}$ and $|u_j-v_j|\le \tau$, then $u_j\notin I$.
Hence a mismatch
\[
\mathbbm 1\{u_j\in I\}\ne \mathbbm 1\{v_j\in I\}
\]
can occur only if either
\[
v_j\in I^{+\tau}\setminus I^{-\tau}
\qquad\text{or}\qquad
|u_j-v_j|>\tau.
\]
Therefore,
\[
|N_u(I)-N_v(I)|
\le
N_v(I^{+\tau}\setminus I^{-\tau})
+
\#\{j:|u_j-v_j|>\tau\}.
\]
Finally,
\[
\#\{j:|u_j-v_j|>\tau\}
\le
\frac{\|u-v\|_2^2}{\tau^2},
\]
which proves the lemma.
\end{proof}

Applying Lemma~\ref{lem:appB-bincount-stability} with $u=\bfbeta$, $v=\bfbeta_0$, and $I=I_k^l$, we obtain, for any choice of $\tau_l>0$,
\begin{equation}
\label{eq:appB-bin-stab-applied}
\big|N_{\bfbeta}(I_k^l)-N_{\bfbeta_0}(I_k^l)\big|
\le
N_{\bfbeta_0}\!\left((I_k^l)^{+\tau_l}\setminus (I_k^l)^{-\tau_l}\right)
+
\frac{\|\bfbeta-\bfbeta_0\|_2^2}{\tau_l^2}.
\end{equation}

\subsection*{B.4 Controlling the boundary-strip counts}

We now choose
\begin{equation}
\label{eq:appB-tau-choice}
\tau_l:=c_\tau\frac{R_l}{n},
\qquad 0\le l\le L,
\end{equation}
where $c_\tau>0$ is a sufficiently small constant.

Define the good boundary-count event
\[
E_{\partial}
:=
\left\{
N_{\bfbeta_0}\!\left((I_k^l)^{+\tau_l}\setminus (I_k^l)^{-\tau_l}\right)
\le
\frac{M}{4} R_l
\quad
\text{for all }0\le l\le L,\ 0\le k<2^l
\right\}.
\]

\begin{lemma}[Uniform boundary-count control]
\label{lem:appB-boundary-counts}
For $M>0$ sufficiently large and $c_\tau>0$ sufficiently small, there exists $C,c>0$ such that
\[
\PP_{g_0}(E_{\partial}^c)\le Ce^{-cL}.
\]
\end{lemma}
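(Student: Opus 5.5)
The plan is to treat each boundary strip as a fixed set whose count is a binomial random variable under $\PP_{g_0}$, then apply Bernstein's inequality and a union bound, in the same way as Lemma~\ref{lem:app-bin-count-concentration}. The strip $S_{lk}:=(I_k^l)^{+\tau_l}\setminus (I_k^l)^{-\tau_l}$ is deterministic, and it is contained in the union of at most two intervals of length $2\tau_l$ around the endpoints of $I_k^l$. Hence
\[
N_{\bfbeta_0}(S_{lk})\sim\mathrm{Bin}(n,p_{lk}),\qquad p_{lk}:=P_0(S_{lk})\le 4M_0\tau_l = 4M_0c_\tau \frac{R_l}{n}.
\]
The mean therefore satisfies $np_{lk}\le 4M_0c_\tau R_l$. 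Choosing $c_\tau\le M/(32M_0)$ makes $np_{lk}\le \tfrac{M}{8}R_l$, so the event $E_\partial^c$ for cell $(l,k)$ forces a deviation of at least $t:=\tfrac{M}{8}R_l$ above the mean.

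Next I apply Bernstein's inequality with variance at most $np_{lk}\le \tfrac{M}{8}R_l$ and deviation $t=\tfrac{M}{8}R_l$:
\[
\PP_{g_0}\big(N_{\bfbeta_0}(S_{lk})>\tfrac{M}{4}R_l\big)\le \exp\Big\{-\frac{t^2/2}{np_{lk}+t/3}\Big\}\le \exp\{-c\,M R_l\}.
\]
Here $c$ is an absolute constant, since the denominator is at most $\tfrac{4}{3}t$. Because $R_l=\Lambda_n(l)\vee(l+L)\ge l+L$, this gives a bound of $e^{-cM(l+L)}$.

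Finally, a union bound over $0\le k<2^l$ and $0\le l\le L$ gives
\[
\PP_{g_0}(E_\partial^c)\le\sum_{l=0}^L 2^l e^{-cM(l+L)}\le C e^{-cL},
\]
provided $M$ is large enough that $cM>\log 2$. This matches the argument of Lemma~\ref{lem:app-bin-count-concentration}.

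There is no real obstacle here. The only point needing care is that the mean of the strip count scales like $R_l$, not like $n2^{-l}$. This is precisely why $\tau_l$ was chosen proportional to $R_l/n$, and it is what makes the strip count a small-mean variable amenable to Bernstein's inequality in the Poisson-like regime. I would also note that the intersection with $[a,b]$ only shrinks the strips, and that $g_0\le M_0$ is the only property of $g_0$ used.
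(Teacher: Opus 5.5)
Your proposal is correct and follows the paper's proof step for step. The strip counts are binomial with mean $O(c_\tau R_l)$. A one-sided Bernstein bound, with $c_\tau$ small relative to $M/M_0$, gives $e^{-cMR_l}\le e^{-cM(l+L)}$, and a union bound with $M$ large enough absorbs the factor $2^l$. Your bookkeeping is slightly cleaner than the paper's: you keep the factor $4$ from the total strip length $4\tau_l$, and you get an absolute constant in the Bernstein exponent.
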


\begin{proof}
Fix $0\le l\le L$ and $0\le k<2^l$. The boundary strip
\[
(I_k^l)^{+\tau_l}\setminus (I_k^l)^{-\tau_l}
\]
is contained in the union of at most two intervals of total length at most $4\tau_l$. Since $g_0$ is bounded above, there exists a constant $M_0<\infty$ such that
\[
p_{lk}
:=
\PP_{g_0}\!\left(
\beta_{01}\in (I_k^l)^{+\tau_l}\setminus (I_k^l)^{-\tau_l}
\right)
\le M_0\tau_l.
\]
Thus
\[
Z_{lk}
:=
N_{\bfbeta_0}\!\left((I_k^l)^{+\tau_l}\setminus (I_k^l)^{-\tau_l}\right)
\sim \mathrm{Bin}(n,p_{lk}),
\]
with
\[
\EE_{g_0}Z_{lk}=np_{lk}\le M_0n\tau_l=M_0c_\tau R_l.
\]

Choose $c_\tau < \frac{M}{8M_0}$, so that $\frac{M}{4}R_l  -\mathbb E Z_{lk}\ge (M/8)R_l > 0$. Then by Bernstein's inequality on $Z_{lk} - \mathbb E Z_{lk}$, and the bound $p_{lk}\le M_0\tau_l$,
\[
\PP_{g_0}\!\left(
Z_{lk}>\frac{M}{4}R_l
\right)
\le
C\exp\left\{
-c_{\tau,1}
\frac{R_l^2}{n\tau_l+R_l}
\right\}
\]
for a constant $c_{\tau,1}>0$ which increases with $M$. Since $n\tau_l=c_\tau R_l$, we obtain
\[
\PP_{g_0}\!\left(
Z_{lk}>\frac{M}{4}R_l
\right)
\le
C\exp(-c_{\tau,2}R_l)
\]
for a constant $c_{\tau,2}>0$ which increases with $M$. Because $R_l\ge l+L$, this further implies
\[
\PP_{g_0}\!\left(
Z_{lk}>\frac{M}{4}R_l
\right)
\le
C\exp\{-c_{\tau,2}(l+L)\}.
\]

Taking a union bound over all levels and cells,
\[
\begin{aligned}
\PP_{g_0}(E_{\partial}^c)
&\le
\sum_{l=0}^L\sum_{k<2^l}
C\exp\{-c_{\tau,2}(l+L)\} \\
&=
Ce^{-c_{\tau,2}L}
\sum_{l=0}^L
\exp\{-(c_{\tau,2}-\log 2)l\}.
\end{aligned}
\]
Choosing $M>0$ sufficiently large so that $c_{\tau,2}>\log 2$, the final sum is bounded by a constant. Therefore, for some $C,c>0$,
\[
\PP_{g_0}(E_{\partial}^c)\le Ce^{-cL}.
\]
\end{proof}

\subsection*{B.5 Reduction to an \texorpdfstring{$\ell_2$}{l2}-posterior tail}

We now control the term $\Pi_{L,\beta}(U\mid \bY)$ appearing in \eqref{eq:appB-reduce-to-U}. On the event $E_{\partial}$, if $U$ occurs, then for some $0\le l\le L$ and $0\le k<2^l$,
\[
\big|N_{\bfbeta}(I_k^l)-N_{\bfbeta_0}(I_k^l)\big|>\frac{M}{2}R_l.
\]
By \eqref{eq:appB-bin-stab-applied} and the definition of $E_{\partial}$,
\[
\frac{M}{2}R_l
<
\frac{M}{4}R_l
+
\frac{\|\bfbeta-\bfbeta_0\|_2^2}{\tau_l^2}.
\]
Therefore
\[
\|\bfbeta-\bfbeta_0\|_2^2
>
\frac{M}{4}\tau_l^2R_l.
\]
Define
\begin{equation}
\label{eq:appB-aL-def}
a_L
:=
\frac{M}{4}
\min_{0\le l\le L}
\left\{
\tau_l^2R_l
\right\}.
\end{equation}
Then
\[
E_{\partial}\cap U
\subseteq
\left\{
\|\bfbeta-\bfbeta_0\|_2^2>a_L
\right\}.
\]
Hence,
\[
\Pi_{L,\beta}(U\mid \bY)
\le
\mathbbm 1_{E_{\partial}^c}
+
\Pi_{L,\beta}\!\left(
\|\bfbeta-\bfbeta_0\|_2^2>a_L
\;\middle|\; \bY
\right).
\]
Taking $\EE_{g_0}$ and applying Lemma~\ref{lem:appB-boundary-counts}, we obtain
\[
\EE_{g_0}\!\left[
\Pi_{L,\beta}(U\mid \bY)
\right]
\le
Ce^{-cL}
+
\EE_{g_0}\!\left[
\Pi_{L,\beta}\!\left(
\|\bfbeta-\bfbeta_0\|_2^2>a_L
\;\middle|\; \bY
\right)
\right].
\]
Combining this with \eqref{eq:appB-reduce-to-U},
\begin{equation}
\label{eq:appB-beta-not-B1-bound}
\EE_{g_0}\!\left[
\Pi_{L,\beta}(\bfbeta\notin\mathcal B_1\mid \bY)
\right]
\le
Ce^{-cL}
+
\EE_{g_0}\!\left[
\Pi_{L,\beta}\!\left(
\|\bfbeta-\bfbeta_0\|_2^2>a_L
\;\middle|\; \bY
\right)
\right].
\end{equation}

It remains to lower bound $a_L$. By \eqref{eq:appB-tau-choice},
\[
\tau_l^2R_l
=
c_\tau^2\frac{R_l^3}{n^2}.
\]
Thus
\[
a_L
\asymp
\min_{0\le l\le L}\frac{R_l^3}{n^2}.
\]
Since $R_l\ge \Lambda_n(l)$ and
\[
\Lambda_n(l)
=
\sqrt{(l+L)n2^{-l}},
\]
we have, for all $0\le l\le L$,
\[
\Lambda_n(l)
\gtrsim
\sqrt{Ln2^{-L}}.
\]
Therefore
\[
R_l^3
\ge
\Lambda_n(l)^3
\gtrsim
(Ln2^{-L})^{3/2}.
\]
It follows that
\begin{equation}
\label{eq:appB-aL-lower}
a_L
\gtrsim
\frac{(Ln2^{-L})^{3/2}}{n^2}
=
\frac{L^{3/2}}{n^{1/2}2^{3L/2}}.
\end{equation}

\subsection*{B.6 Latent-coefficient contraction and conclusion}

We now invoke the posterior contraction result for the latent coefficient vector.
Namely, by Proposition~\ref{prop:beta-l2-contract}, for a sufficiently large constant $A > 0$,
\[
\EE_{g_0}\!\left[
\Pi_{L,\beta}\!\left(
\|\bfbeta-\bfbeta_0\|_2
>
A \sigma
\sqrt{\frac{n + 2^L\log(n+1)}{\lambda_{\min}(X^\top X)}}
\;\middle|\; \bY
\right)
\right]
\to 0.
\]

Because $L \leq L_{\mathrm{Cast}}$, $2^L \log(n+1) = O(n)$. Thus, for $A$ sufficiently large,
\[
\EE_{g_0}\!\left[
\Pi_{L,\beta}\!\left(
\|\bfbeta-\bfbeta_0\|_2
>
A \sigma
\sqrt{\frac{n}{\lambda_{\min}(X^\top X)}}
\;\middle|\; \bY
\right)
\right]
\to 0.
\]

By the definition of $L_{\mathrm{noise}}$ and the fact that $L\le L_{\mathrm{noise}}$,
\[
\frac{2^{3L/2}}{L^{3/2}}
\le
c_{\mathrm{noise}}\,\frac{\lambda_{\min}(X^\top X)}{\sigma^2 n^{3/2}}.
\]
Equivalently,
\[
\frac{\sigma^2 n}{\lambda_{\min}(X^\top X)}
\le
c_{\mathrm{noise}}\,\frac{L^{3/2}}{n^{1/2}2^{3L/2}}.
\]
Combining this with \eqref{eq:appB-aL-lower}, and choosing $c_{\mathrm{noise}}>0$ sufficiently small relative to the constants in \eqref{eq:appB-aL-lower} and Proposition~\ref{prop:beta-l2-contract}, we obtain
\[
A^2\sigma^2\frac{n}{\lambda_{\min}(X^\top X)}
\le
a_L.
\]
Therefore,
\[
\left\{
\|\bfbeta-\bfbeta_0\|_2^2>a_L
\right\}
\subseteq
\left\{
\|\bfbeta-\bfbeta_0\|_2
>
A\sigma\sqrt{\frac{n}{\lambda_{\min}(X^\top X)}}
\right\}.
\]
Using Proposition~\ref{prop:beta-l2-contract} in \eqref{eq:appB-beta-not-B1-bound} gives
\[
\EE_{g_0}\!\left[
\Pi_{L,\beta}(\bfbeta\notin\mathcal B_1\mid \bY)
\right]
\longrightarrow 0.
\]
Together with the decomposition from Lemma~\ref{lem:appB-tower} and the bound
\[
\sup_{\bfbeta\in\mathcal B_1}\Pi_L(T_g\mid \bfbeta)\le Ce^{-cL},
\]
this yields
\[
\EE_{g_0}\!\left[
\Pi_L(T_g\mid \bY)
\right]
\longrightarrow 0,
\]
which proves Theorem~\ref{thm:main}.
\qed

\section{Posterior contraction of \texorpdfstring{$\bfbeta$}{beta}}
\label{app:beta-contraction}

This appendix proves the latent-coefficient contraction bound used in Appendix~\ref{app:proof-main}, and derives Corollary~\ref{cor:beta-posterior-mean-l2} on the $\ell_2$ error of the posterior mean.

Recall that $\Pi_{L,\beta}$ denotes the marginal prior distribution of $\bfbeta=(\beta_1,\dots,\beta_n)$ induced by the level-$L$ truncated P\'olya tree prior on $g$, after integrating out $g$.
Let $\pi_{L,\beta}$ denote its Lebesgue density on $[a,b]^n$,
\[
\pi_{L,\beta}(\bfbeta)
=
\int \prod_{j=1}^n g(\beta_j)\,\Pi_L(dg),
\qquad
\bfbeta=(\beta_1,\dots,\beta_n)\in [a,b]^n.
\]

\begin{proposition}[$\bfbeta$ posterior contraction]
\label{prop:beta-l2-contract}
Assume the true model \eqref{eq:prior-iid} and \eqref{eq:lik-linear}, with $g_0$ supported on $[a,b]$ and satisfying $g_0\le M_0$.
Let $L=L(m,n)$ be any sequence of truncation levels, not necessarily the truncation level used in Theorem~\ref{thm:main}. Suppose $X$ has full column rank. Then there exist finite constants $A,c>0$ such that
\begin{equation}
\label{eq:appC-rnL-contraction}
\EE_{g_0}\!\left[
\Pi_{L,\beta}\!\left(
\left\{
\bfbeta:
\|\bfbeta-\bfbeta_0\|_2
>
A\sigma
\sqrt{\frac{n+2^L\log(n+1)}{\lambda_{\min}(X^\top X)}}
\right\}
\;\middle|\;
\bY
\right)
\right]
\le 2e^{-c n}
\longrightarrow 0.
\end{equation}
\end{proposition}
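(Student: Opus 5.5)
The plan is a change-of-measure argument rather than a direct analysis of the posterior ratio. Under the true law $\PP_{g_0}$ the vector $\bfbeta_0$ has density $g_0^{\otimes n}$ on $[a,b]^n$. Under the working model it has the marginal prior density $\pi_{L,\beta}$. If $\pi_{L,\beta}$ is bounded below pointwise, any expectation of a nonnegative functional of $(\bfbeta_0,\bY)$ under $\PP_{g_0}$ can be bounded by a multiple of the same expectation under the working (Bayesian) joint law. Under that joint law the posterior is exactly calibrated, so only a frequentist-type tail for $\bfbeta_0$ remains. This route uses only $\lambda_{\min}(X^\top X)$ and needs no upper bound on $\pi_{L,\beta}$ and no prior-mass/denominator estimate.

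\textbf{Step 1 (lower bound on the marginal prior).} Integrating out the independent $\mathrm{Beta}(1,1)$ splits node by node gives
\[
\pi_{L,\beta}(\bfbeta)=\Big(\frac{2^L}{B}\Big)^n\prod_{l=0}^{L-1}\prod_{k<2^l}\frac{N^{(0)}_{\bfbeta}(I_k^l)!\,N^{(1)}_{\bfbeta}(I_k^l)!}{(N_{\bfbeta}(I_k^l)+1)!},
\]
where $B=b-a$ as in Appendix A. Each node factor equals $\{(N+1)\binom{N}{N^{(0)}}\}^{-1}$ with $N=N_{\bfbeta}(I_k^l)$, and this is at least $2^{-N}/(n+1)$.

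At each level the counts $N_{\bfbeta}(I_k^l)$ sum to $n$. So the $2^{-N}$ factors multiply to $2^{-nL}$ over all levels, which cancels $2^{nL}$. There are fewer than $2^L$ nodes, so
\[
\pi_{L,\beta}(\bfbeta)\ge B^{-n}(n+1)^{-2^L}\quad\text{for all }\bfbeta\in[a,b]^n.
\]
Combined with $g_0\le M_0$, this yields
\[
g_0^{\otimes n}(\bfbeta)\le \exp\{n\log(M_0B)+2^L\log(n+1)\}\,\pi_{L,\beta}(\bfbeta).
\]

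\textbf{Step 2 (transfer to the working joint law).} Set $r:=A\sigma\sqrt{(n+2^L\log(n+1))/\lambda_{\min}(X^\top X)}$ and $\Phi(\bfbeta_0,\bY):=\Pi_{L,\beta}(\|\bfbeta-\bfbeta_0\|_2>r\mid\bY)\in[0,1]$. The conditional law of $\bY$ given $\bfbeta_0$ is the same Gaussian under both models. Hence, by Step 1,
\[
\EE_{g_0}\Phi\le e^{n\log(M_0B)+2^L\log(n+1)}\,\EE_{\mathrm{W}}\Phi,
\]
where $\EE_{\mathrm{W}}$ is expectation over $\bfbeta_0\sim\pi_{L,\beta}$ and $\bY\mid\bfbeta_0\sim\calN(X\bfbeta_0,\sigma^2I_m)$.

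Under this law, $\EE_{\mathrm W}\Phi=\PP(\|\bfbeta-\bfbeta_0\|_2>r)$, where $\bfbeta,\bfbeta_0$ are conditionally i.i.d.\ from $\Pi_{L,\beta}(\cdot\mid\bY)$. Let $\widehat\bfbeta:=(X^\top X)^{-1}X^\top\bY$, which is well defined by full column rank. By the triangle inequality and exchangeability of $\bfbeta$ and $\bfbeta_0$,
\[
\EE_{\mathrm W}\Phi\le 2\,\PP_{\mathrm W}(\|\bfbeta_0-\widehat\bfbeta\|_2>r/2).
\]
Given $\bfbeta_0$, we have $\widehat\bfbeta-\bfbeta_0\sim\calN(0,\sigma^2(X^\top X)^{-1})$ regardless of the prior. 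Therefore $\|\widehat\bfbeta-\bfbeta_0\|_2^2\le \sigma^2\lambda_{\min}(X^\top X)^{-1}\chi^2_n$ stochastically.

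\textbf{Step 3 (tail bound and conclusion).} A standard $\chi^2_n$ tail bound (Laurent--Massart) gives
\[
\PP(\chi^2_n>A^2(n+2^L\log(n+1))/4)\le \exp\{-c'A^2(n+2^L\log(n+1))\}
\]
for $A$ larger than an absolute constant. Multiplying by the factor from Step 2, the exponent becomes
\[
n\log(M_0B)+2^L\log(n+1)-c'A^2\{n+2^L\log(n+1)\}.
\]
Choosing $A$ large enough, depending only on $M_0B$, makes this at most $-cn$. This gives the bound $2e^{-cn}$ in \eqref{eq:appC-rnL-contraction}.

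The step I expect to need the most care is the lower bound in Step 1, in particular the exact Beta-integral formula for the marginal and the level-wise cancellation of $2^{nL}$. This bound is what produces the $2^L\log(n+1)$ term in the radius. The change of measure itself is routine once that pointwise bound holds. I would also double-check that $\Phi$ is jointly measurable, so that the density comparison applies to the joint law of $(\bfbeta_0,\bY)$.
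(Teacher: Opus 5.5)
Your proposal is correct and follows the paper's proof essentially step for step: the same Beta-integral lower bound $\pi_{L,\beta}\ge B^{-n}(n+1)^{-2^L}$, the same change of measure to the working joint law, and the same exchangeability-plus-OLS triangle-inequality argument. The differences are only cosmetic. You bound $\|\widehat{\bfbeta}-\bfbeta_0\|_2$ directly via the covariance $\sigma^2(X^\top X)^{-1}$, whereas the paper works in prediction norm and converts using $\lambda_{\min}(X^\top X)$ at the end; and you use a Laurent--Massart tail where the paper uses the moment-generating-function bound $\PP(\chi^2_n>x)\le e^{-x/4}2^{n/2}$.
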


\begin{proof}
We argue in three steps.
First, we derive a uniform lower bound for the marginal prior density $\pi_{L,\beta}$.
Second, we prove a prediction-norm contraction bound when the working P\'olya tree model is used both to generate and analyze the data.
Third, a change-of-measure argument transfers this bound to the true data-generating law $\PP_{g_0}$, and the prediction-norm bound is converted into an $\ell_2$-bound using the conditioning of $X$.

\medskip
\noindent\textbf{Step 1. Lower bound for the marginal P\'olya tree prior on $\bfbeta$.}
Let $B:=b-a$. Recall $V_{l,k}$ denotes the random split proportion assigned to the left child of $I_k^l$, so that $1-V_{l,k}$ is assigned to the right child.
Under the truncated P\'olya tree prior considered here, the variables $\{V_{l,k}:0\le l\le L-1,\ 0\le k<2^l\}$ are independent $\mathrm{Beta}(1,1)$ random variables.
Conditional on these split variables, the density at a point $\beta_j$ is the terminal-bin density $2^L/B$ times the product of the split proportions along the path from the root to the terminal bin containing $\beta_j$.
Hence, for $\bfbeta=(\beta_1,\dots,\beta_n)$,
\[
\prod_{j=1}^n g(\beta_j)
=
\left(\frac{2^L}{B}\right)^n
\prod_{l=0}^{L-1}\prod_{k=0}^{2^l-1}
V_{l,k}^{N_{\bfbeta}^{(0)}(I_k^l)}
(1-V_{l,k})^{N_{\bfbeta}^{(1)}(I_k^l)}.
\]
For a fixed split variable $V_{l,k}$, integrating its factor with respect to its $\mathrm{Beta}(1,1)$ density, i.e. the uniform density on $[0,1]$, gives
\[
\int_0^1
v^{N_{\bfbeta}^{(0)}(I_k^l)}
(1-v)^{N_{\bfbeta}^{(1)}(I_k^l)}
\,dv
=
\mathrm B\!\left(
N_{\bfbeta}^{(0)}(I_k^l)+1,
N_{\bfbeta}^{(1)}(I_k^l)+1
\right).
\]
Multiplying these factors over all internal nodes yields
\[
\pi_{L,\beta}(\bfbeta)
=
\left(\frac{2^L}{B}\right)^n
\prod_{l=0}^{L-1}\prod_{k=0}^{2^l-1}
\mathrm B\!\left(
N_{\bfbeta}^{(0)}(I_k^l)+1,
N_{\bfbeta}^{(1)}(I_k^l)+1
\right),
\]
where $\mathrm B$ denotes the beta function.

Each coordinate of $\bfbeta$ contributes to exactly one node at each level $l=0,\dots,L-1$. Hence
\[
2^{Ln}
=
\prod_{l=0}^{L-1}\prod_{k=0}^{2^l-1}
2^{N_{\bfbeta}(I_k^l)}.
\]
It follows that
\[
\pi_{L,\beta}(\bfbeta)
=
B^{-n}
\prod_{l=0}^{L-1}\prod_{k=0}^{2^l-1}
R\!\left(
N_{\bfbeta}^{(0)}(I_k^l),
N_{\bfbeta}^{(1)}(I_k^l)
\right),
\]
where, for nonnegative integers $u,v$,
\[
R(u,v):=2^{u+v}\mathrm B(u+1,v+1).
\]
If $s=u+v$, then
\[
R(u,v)
=
\frac{2^s}{(s+1)\binom{s}{u}}
\ge
\frac{1}{s+1},
\]
since $\binom{s}{u}\le 2^s$. Applying this inequality with
$s=N_{\bfbeta}(I_k^l)$ yields
\[
\pi_{L,\beta}(\bfbeta)
\ge
B^{-n}
\prod_{l=0}^{L-1}\prod_{k=0}^{2^l-1}
\frac{1}{N_{\bfbeta}(I_k^l)+1}.
\]
Because $N_{\bfbeta}(I_k^l)\le n$,
\[
\prod_{l=0}^{L-1}\prod_{k=0}^{2^l-1}
\{N_{\bfbeta}(I_k^l)+1\}
\le
(n+1)^{\sum_{l=0}^{L-1}2^l}
\le
(n+1)^{2^L}.
\]
Thus, uniformly over $\bfbeta\in[a,b]^n$,
\begin{equation}
\label{eq:appC-prior-density-lower}
\pi_{L,\beta}(\bfbeta)
\ge
B^{-n}(n+1)^{-2^L}.
\end{equation}

Let
\[
q_0(\bfbeta):=\prod_{j=1}^n g_0(\beta_j)
\]
be the density of $\bfbeta_0$ under the true law. Since $g_0\le M_0$, the bound \eqref{eq:appC-prior-density-lower} implies
\[
\frac{q_0(\bfbeta)}{\pi_{L,\beta}(\bfbeta)}
\le
(B M_0)^n(n+1)^{2^L}.
\]
Consequently, for a finite constant $D$,
\begin{equation}
\label{eq:appC-density-ratio}
\frac{q_0(\bfbeta)}{\pi_{L,\beta}(\bfbeta)}
\le
\exp\{D s_{n,L}\},
\qquad
\bfbeta\in[a,b]^n,
\end{equation}
where
\[
s_{n,L}:=n+2^L\log(n+1).
\]

\medskip
\noindent\textbf{Step 2. Contraction under the working data-generating law.}
Let $(\widetilde{\bfbeta}, \widetilde{\bY})$ be jointly distributed according to the working model,
\[
\widetilde{\bfbeta}\sim \Pi_{L,\beta},
\qquad
\widetilde{\bY}\mid \widetilde{\bfbeta}
\sim
\calN(X\widetilde{\bfbeta},\sigma^2I_m),
\]
and denote the corresponding joint law by $\PP_L$. We show that, for every $t>0$,
\begin{equation}
\label{eq:appC-polya-data-gen-tail}
\EE_L\!\left[
\Pi_{L,\beta}\!\left(
\left\{
\bfbeta:
\|X(\bfbeta-\widetilde{\bfbeta})\|_2>\sigma t
\right\}
\;\middle|\;
\widetilde{\bY}
\right)
\right]
\le
2\PP\!\left(\chi_n^2>\frac{t^2}{4}\right),
\end{equation}
where $\EE_L$ denotes expectation over $(\widetilde{\bfbeta}, \widetilde{\bY})$ with respect to $\PP_L$.
To prove \eqref{eq:appC-polya-data-gen-tail}, draw $\bfbeta^\star$ from $\Pi_{L,\beta}(\cdot\mid\widetilde{\bY})$, conditionally independently of $\widetilde{\bfbeta}$ given $\widetilde{\bY}$.
Conditional on $\widetilde{\bY}$, the two vectors $\bfbeta^\star$ and $\widetilde{\bfbeta}$ are independent draws from the same posterior distribution. Therefore,
\[
\EE_L\!\left[
\Pi_{L,\beta}\!\left(
\left\{
\bfbeta:
\|X(\bfbeta-\widetilde{\bfbeta})\|_2>\sigma t
\right\}
\;\middle|\;
\widetilde{\bY}
\right)
\right]
=
\PP_L\!\left(
\|X(\bfbeta^\star-\widetilde{\bfbeta})\|_2>\sigma t
\right).
\]
Let
\[
\widehat{\bfbeta}:=(X^\top X)^{-1}X^\top\widetilde{\bY}
\]
be the ordinary least squares estimator. By the triangle inequality,
\begin{align*}
\PP_L\!\left(
\|X(\bfbeta^\star-\widetilde{\bfbeta})\|_2>\sigma t
\right)
&\le
\PP_L\!\left(
\|X(\bfbeta^\star-\widehat{\bfbeta})\|_2>\frac{\sigma t}{2}
\right) \\
&\quad+
\PP_L\!\left(
\|X(\widehat{\bfbeta}-\widetilde{\bfbeta})\|_2>\frac{\sigma t}{2}
\right).
\end{align*}
Conditional on $\widetilde{\bY}$, $\bfbeta^\star$ and $\widetilde{\bfbeta}$ have the same distribution, while $\widehat{\bfbeta}$ is a function of $\widetilde{\bY}$.
The two probabilities on the right-hand side are therefore equal, and so
\[
\PP_L\!\left(
\|X(\bfbeta^\star-\widetilde{\bfbeta})\|_2>\sigma t
\right)
\le
2\PP_L\!\left(
\|X(\widehat{\bfbeta}-\widetilde{\bfbeta})\|_2>\frac{\sigma t}{2}
\right).
\]
Under $\PP_L$, we can write
\[
\widetilde{\bY}=X\widetilde{\bfbeta}+\sigma Z,
\qquad
Z\sim\calN(0,I_m),
\]
with $Z$ independent of $\widetilde{\bfbeta}$. Thus
\[
X(\widehat{\bfbeta}-\widetilde{\bfbeta})
=
\sigma P_XZ,
\qquad
P_X:=X(X^\top X)^{-1}X^\top.
\]
Since $X$ has full column rank, $P_X$ is the orthogonal projection onto $\operatorname{col}(X)$, which has dimension $n$. Therefore,
\[
\sigma^{-2}\|X(\widehat{\bfbeta}-\widetilde{\bfbeta})\|_2^2
=
\|P_XZ\|_2^2
\sim
\chi_n^2.
\]
This proves \eqref{eq:appC-polya-data-gen-tail}.

\medskip
\noindent\textbf{Step 3. Transfer to the true law $\PP_{g_0}$.}
For $t > 0$, define the posterior tail function
\[
T_t(\boldsymbol u,\by)
:=
\Pi_{L,\beta}\!\left(
\left\{
\bfbeta:
\|X(\bfbeta -\boldsymbol u)\|_2>\sigma t
\right\}
\;\middle|\;
\by
\right).
\]
Let $p_{\boldsymbol u}(\by)$ be the density of $\calN(X\boldsymbol u,\sigma^2I_m)$ evaluated at $\by$.
Under $\PP_{g_0}$, $\bfbeta_0$ has density $q_0$ and $\bY\mid\bfbeta_0=\boldsymbol u$ has density $p_{\boldsymbol u}$. Hence
\begin{align}
\EE_{g_0}T_t(\bfbeta_0,\bY)
&=
\int_{[a,b]^n}\int_{\RR^m}
T_t(\boldsymbol u,\by)
q_0(\boldsymbol u)p_{\boldsymbol u}(\by)
\,d\by\,d\boldsymbol u \nonumber\\
&=
\EE_L\!\left[
\frac{q_0(\widetilde{\bfbeta})}{\pi_{L,\beta}(\widetilde{\bfbeta})}
T_t(\widetilde{\bfbeta},\widetilde{\bY})
\right].
\label{eq:appC-change-of-measure}
\end{align}
Combining \eqref{eq:appC-change-of-measure} with the density-ratio bound \eqref{eq:appC-density-ratio} and the working-model tail bound \eqref{eq:appC-polya-data-gen-tail} gives
\begin{equation}
\label{eq:appC-true-prediction-tail}
\EE_{g_0}\!\left[
\Pi_{L,\beta}\!\left(
\left\{
\bfbeta:
\|X(\bfbeta-\bfbeta_0)\|_2>\sigma t
\right\}
\;\middle|\;
\bY
\right)
\right]
\le
2\exp\{Ds_{n,L}\}
\PP\!\left(\chi_n^2>\frac{t^2}{4}\right).
\end{equation}

Set $t=A\sqrt{s_{n,L}}$, with $A>0$ to be chosen.
If $S\sim\chi_n^2$, then $\EE e^{S/4}=2^{n/2}$. Markov's inequality gives, for every $x>0$,
\[
\PP(S>x)\le e^{-x/4}2^{n/2}.
\]
Taking $x=A^2s_{n,L}/4$ and using $s_{n,L}\ge n$,
\[
\PP\!\left(S>\frac{A^2s_{n,L}}{4}\right)
\le
\exp\!\left\{
-\frac{A^2}{16}s_{n,L}
+\frac{\log 2}{2}n
\right\}
\le
\exp\!\left\{
-\left(\frac{A^2}{16}-\frac{\log 2}{2}\right)s_{n,L}
\right\}.
\]
Therefore \eqref{eq:appC-true-prediction-tail} implies
\[
\EE_{g_0}\!\left[
\Pi_{L,\beta}\!\left(
\left\{
\bfbeta:
\|X(\bfbeta-\bfbeta_0)\|_2>A\sigma\sqrt{s_{n,L}}
\right\}
\;\middle|\;
\bY
\right)
\right]
\le
2\exp\!\left\{
-\left(
\frac{A^2}{16}-\frac{\log 2}{2}-D
\right)s_{n,L}
\right\}.
\]
Choose $A$ large enough that
\[
c := \frac{A^2}{16} - D - \frac{\log 2}{2} > 0.
\]
The last display then implies
\[
\EE_{g_0}\!\left[
\Pi_{L,\beta}\!\left(
\left\{
\bfbeta:
\|X(\bfbeta-\bfbeta_0)\|_2>A\sigma\sqrt{s_{n,L}}
\right\}
\;\middle|\;
\bY
\right)
\right]
\le
2\exp\!\left\{
-c s_{n,L}
\right\}
\le
2\exp\!\left\{
-c n
\right\},
\]
since $s_{n,L}\ge n$.

It remains only to convert this prediction-norm bound into an $\ell_2$-bound. For any $\bfbeta\in\RR^n$,
\[
\|X(\bfbeta-\bfbeta_0)\|_2^2
=
(\bfbeta-\bfbeta_0)^\top X^\top X(\bfbeta-\bfbeta_0)
\ge
\lambda_{\min}(X^\top X)\|\bfbeta-\bfbeta_0\|_2^2.
\]
Thus
\[
\left\{
\bfbeta:
\|\bfbeta-\bfbeta_0\|_2
>
A\sigma\sqrt{\frac{s_{n,L}}{\lambda_{\min}(X^\top X)}}
\right\}
\subseteq
\left\{
\bfbeta:
\|X(\bfbeta-\bfbeta_0)\|_2>A\sigma\sqrt{s_{n,L}}
\right\}.
\]
This proves \eqref{eq:appC-rnL-contraction}.

\end{proof}

\begin{proof}[Proof of Corollary~\ref{cor:beta-posterior-mean-l2}]
Set
\[
S_n
:=
A\sigma
\sqrt{\frac{n+2^L\log(n+1)}{\lambda_{\min}(X^\top X)}} .
\]
By Jensen's inequality,
\[
\|\widehat{\bfbeta}(\bY)-\bfbeta_0\|_2^2
\le
\int
\|\bfbeta-\bfbeta_0\|_2^2
\,\Pi_{L,\beta}(d\bfbeta\mid\bY).
\]
Since both $\bfbeta$ and $\bfbeta_0$ lie in $[a,b]^n$, the integrand is bounded above by $nB^2$. Therefore,
\[
\|\widehat{\bfbeta}(\bY)-\bfbeta_0\|_2^2
\le
S_n^2
+
nB^2
\Pi_{L,\beta}\!\left(
\|\bfbeta-\bfbeta_0\|_2>S_n
\mid \bY
\right).
\]
Taking $\EE_{g_0}$ and applying Proposition~\ref{prop:beta-l2-contract} gives
\[
\EE_{g_0}\left[
\|\widehat{\bfbeta}(\bY)-\bfbeta_0\|_2^2
\right]
\le
A^2\sigma^2
\frac{n+2^L\log(n+1)}{\lambda_{\min}(X^\top X)}
+
2nB^2e^{-cn},
\]
which tends to zero by assumption.
\end{proof}

\section{Proof of Corollary~\ref{cor:random-design}}
\label{app:gaussian-random-designs}

This appendix proves Corollary~\ref{cor:random-design}.
We first prove a lemma that lower bounds $\lambda_{\min}(X^\top X)$ with high probability under the random-design assumptions.
We then use this lemma to reduce the random-design corollary to the fixed-design result of Theorem~\ref{thm:main} on high-probability events under the design law.

\begin{lemma}[Smallest eigenvalue of Gaussian random designs]
\label{lem:gaussian-random-design-conditioning}
Let $X=X_{m,n}\in\RR^{m\times n}$ have independent rows
$x_1,\dots,x_m\in\RR^n$ with
\[
x_i\sim \calN(\bmu_n,\Sigma_n),
\qquad
\Omega_n:=\Sigma_n+\bmu_n\bmu_n^\top .
\]
Assume that
\[
\lambda_{\min}(\Omega_n)\ge \kappa
\]
for some constant $\kappa>0$ independent of $m,n$.
Then there exist universal constants $\Gamma,\xi>0$ such that whenever $m\ge \Gamma n$,
\[
\PP_X\!\left(
\operatorname{rank}(X)=n
\quad\text{and}\quad
\lambda_{\min}(X^\top X)\ge \frac{\kappa m}{4}
\right)
\ge
1-2\exp(-\xi m).
\]
\end{lemma}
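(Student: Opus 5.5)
The plan is to whiten the design and reduce to a standard lower bound on the smallest singular value of a matrix with i.i.d.\ isotropic rows. The difficulty is the nonzero mean: $X^\top X/m$ estimates the second-moment matrix $\Omega_n$, not $\Sigma_n$. The rows of $W:=X\Omega_n^{-1/2}$ have identity second-moment matrix, but they are not centered Gaussians. Also, $\Sigma_n$ may be singular; we only know $\lambda_{\min}(\Omega_n)\ge\kappa$.

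The first step is a factorization. Since
\[
X^\top X = \Omega_n^{1/2}\,W^\top W\,\Omega_n^{1/2},
\]
we get
\[
\lambda_{\min}(X^\top X)\ge \lambda_{\min}(\Omega_n)\,\lambda_{\min}(W^\top W)\ge \kappa\,\lambda_{\min}(W^\top W).
\]
So it suffices to show
\[
\lambda_{\min}(W^\top W)\ge m/4
\]
with probability at least $1-2e^{-\xi m}$. On that event $W$ has full column rank, and hence so does $X$, which gives the rank claim for free.

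The rows $w_i=\Omega_n^{-1/2}x_i$ are i.i.d., satisfy $\EE w_iw_i^\top=I_n$, and for every unit $v$ the scalar $\langle w_i,v\rangle$ is Gaussian with mean $\mu_v$ and variance $s_v^2$, where $\mu_v^2+s_v^2=1$.

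The main obstacle is that the rows are not uniformly subgaussian in the usual normalization: the mean $\Omega_n^{-1/2}\bmu_n$ can have norm close to $1$, but it is bounded by $1$. Rather than a two-sided concentration bound, I would prove only the lower tail, since $\lambda_{\min}$ only needs a one-sided estimate.

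For a fixed unit $v$, I would use the Paley--Zygmund / small-ball route. The fourth moment of a Gaussian with $\mu_v^2+s_v^2=1$ is $\mu_v^4+6\mu_v^2s_v^2+3s_v^4\le 3$. This gives
\[
\PP\bigl(\langle w_i,v\rangle^2\ge 1/2\bigr)\ge \frac{(1/2)^2}{3}=\frac1{12}.
\]
Chernoff's bound over the $m$ independent rows then gives
\[
\|Wv\|_2^2\ge c_1 m
\]
with probability at least $1-e^{-c_2m}$, for universal constants $c_1,c_2$. A direct truncated-Laplace-transform bound on $\sum_i\langle w_i,v\rangle^2$ would also work, using $\EE e^{-\lambda\langle w,v\rangle^2}\le 1-\lambda+\tfrac{3}{2}\lambda^2$.

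The next step is to make this uniform in $v$ with an $\epsilon$-net of the sphere, of size at most $(3/\epsilon)^n$. This requires an upper bound on the operator norm. I would use
\[
\|W\|_{op}\le \|G\|_{op}+\sqrt m\,\|\Omega_n^{-1/2}\bmu_n\|_2,
\]
where $G$ is the centered part, whose rows are $\calN(0,\Omega_n^{-1/2}\Sigma_n\Omega_n^{-1/2})$ with covariance at most $I$. Gaussian concentration gives $\|G\|_{op}\le 2\sqrt m+\sqrt n$ with probability at least $1-e^{-m/2}$. Hence $\|W\|_{op}\le C\sqrt m$ once $m\ge n$.

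Choosing $\epsilon$ as a small universal constant and taking a union bound, the net estimate fails with probability at most $(3/\epsilon)^ne^{-c_2m}\le e^{-c_2m/2}$ once $m\ge\Gamma n$. The standard net argument then extends the bound to the whole sphere:
\[
\|Wv\|_2\ge \sqrt{c_1m}-\epsilon C\sqrt m.
\]

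The constant $1/4$ in the statement is fine-tuned, so the crude constant $c_1$ above may not reach it. To match it, I would instead run the argument with the sharper Laplace bound: take $\lambda$ small to get $\sum_i\langle w_i,v\rangle^2\ge(1-\delta)m$ per direction, then use a fine net. This recovers $\lambda_{\min}(W^\top W)\ge m/4$, with $\Gamma$ absorbing the $\log(3/\epsilon)$ term. Adding the two failure probabilities gives the bound $1-2e^{-\xi m}$.
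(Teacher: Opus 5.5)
Your proof is correct, but it takes a different route from the paper: you prove the singular-value bound from scratch, whereas the paper quotes it.

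\textbf{The paper's route.} The paper also whitens, setting $Z=X\Omega_n^{-1/2}$. It then checks the hypotheses of Theorem~5.39 of \citet{vershynin2012introduction}. Since $\langle z_i,u\rangle\sim\calN(a_u,\tau_u^2)$ with $a_u^2+\tau_u^2=1$, the rows have a universal $\psi_2$ bound. Vershynin's isotropy is the second-moment condition $\EE z_iz_i^\top=I_n$, which does not require centering. So the non-centered rows you flag as the main obstacle are not an obstacle for that theorem. It gives $s_{\min}(Z)\ge\sqrt m-C_V\sqrt n-t$ directly, and $t=\sqrt m/4$ with $m\ge16C_V^2n$ finishes.

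\textbf{Your route.} You combine three pieces:
\begin{itemize}
\item a one-sided per-direction lower bound;
\item an operator-norm bound, from splitting $W$ into a centered Gaussian part with covariance $\preceq I_n$ plus the rank-one mean part $\mathbf{1}\nu^\top$ with $\|\nu\|_2\le1$;
\item an $\epsilon$-net.
\end{itemize}

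\textbf{Comparison.} Your argument is self-contained. Its lower-tail step uses only $\EE\langle w_i,v\rangle^2=1$ and $\EE\langle w_i,v\rangle^4\le3$, so that step would survive weaker tail assumptions on the rows. The paper's route is shorter and yields a two-sided bound, at the cost of a black box.

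\textbf{On the Paley--Zygmund step and the constant $1/4$.} The Paley--Zygmund step is unnecessary, and your final constant can be made explicit. The Laplace bound with $\lambda=\delta/3$ gives $\PP(\|Wv\|_2^2\le(1-\delta)m)\le e^{-\delta^2m/6}$. Take $\delta=1/2$, use $\|W\|_{op}\le4\sqrt m$, and choose $\epsilon=1/20$. The net then gives $\|Wv\|_2\ge(\sqrt{1/2}-1/5)\sqrt m\ge\sqrt m/2$ on the whole sphere once $m\ge48\log(60)\,n$. This yields $\lambda_{\min}(X^\top X)\ge\kappa m/4$ with failure probability at most $e^{-m/48}+e^{-m/2}$.
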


\begin{proof}
Let
\[
Z:=X\Omega_n^{-1/2}.
\]
The rows $z_i:=\Omega_n^{-1/2}x_i$ of $Z$ are independent Gaussian random vectors.
Additionally,
\[
\EE[z_i z_i^\top]
=
\Omega_n^{-1/2}\EE[x_i x_i^\top]\Omega_n^{-1/2}
=
\Omega_n^{-1/2}\Omega_n\Omega_n^{-1/2}
=
I_n.
\]
Thus the rows of $Z$ are isotropic in second moment.

We next check that the sub-Gaussian norms of the rows are uniformly bounded.
For every $u\in S^{n-1}$, the random variable $\langle z_i,u\rangle$ is Gaussian. Write
\[
\langle z_i,u\rangle \sim \calN(a_u,\tau_u^2).
\]
Since $\EE[z_i z_i^\top]=I_n$,
\[
a_u^2+\tau_u^2
=
\EE\langle z_i,u\rangle^2
=
1.
\]
Thus $|a_u|\le1$ and $\tau_u\le1$. If $G\sim\calN(0,1)$, then
\[
\langle z_i,u\rangle \stackrel{d}{=} a_u+\tau_u G.
\]
Therefore, by Minkowski's inequality and the Gaussian moment bound $(\EE|G|^p)^{1/p}\le C_0\sqrt p$, $p\ge1$,
\[
\left(\EE|\langle z_i,u\rangle|^p\right)^{1/p}
\le
|a_u|+\tau_u\left(\EE|G|^p\right)^{1/p}
\le
1+C_0\sqrt p
\le
C_1\sqrt p,
\qquad p\ge1,
\]
where $C_0,C_1>0$ are universal constants. Hence
\[
\|\langle z_i,u\rangle\|_{\psi_2}
=
\sup_{p\ge1}
p^{-1/2}
\left(\EE|\langle z_i,u\rangle|^p\right)^{1/p}
\le C_1,
\]
uniformly over $u\in S^{n-1}$. Consequently,
\[
\|z_i\|_{\psi_2}
= \sup_{u\in S^{n-1}}\|\langle z_i,u\rangle\|_{\psi_2}
\le C_1.
\]

We may now apply \citet[Theorem~5.39]{vershynin2012introduction} to the matrix $Z$.
Since the rows of $Z$ are independent, isotropic in second moment, and sub-Gaussian with sub-Gaussian norm bounded by a universal constant, there exist universal constants $C_V,c_v>0$ such that, for every $t\ge0$,
\[
s_{\min}(Z)
\ge
\sqrt m-C_V\sqrt n-t
\]
with probability at least $1-2\exp(-c_Vt^2)$, where $s_{\min}(Z)$ denotes the smallest singular value of $Z$.
Take $t=\sqrt m/4$. If
\[
m\ge 16C_V^2 n,
\]
then $C_V\sqrt n\le \sqrt m/4$, and hence
\[
s_{\min}(Z)\ge \frac{\sqrt m}{2}
\]
with probability at least $1-2\exp(-c_Vm/16)$.

On this event, for every $v\in\RR^n$,
\[
\|Xv\|_2
=
\|Z\Omega_n^{1/2}v\|_2
\ge
s_{\min}(Z)\|\Omega_n^{1/2}v\|_2
\ge
\frac{\sqrt m}{2}\sqrt{\kappa}\,\|v\|_2.
\]
Thus
\[
\lambda_{\min}(X^\top X)\ge \frac{\kappa m}{4}.
\]
In particular, $X^\top X$ is positive definite, so $\operatorname{rank}(X)=n$.
The result follows by taking
\[
\Gamma=16C_V^2,
\qquad
\xi=\frac{c_V}{16}.
\]
\end{proof}

\begin{proof}[Proof of Corollary~\ref{cor:random-design}]
Let
\[
\mathcal E_{m,n}
:=
\left\{
\operatorname{rank}(X)=n
\quad\text{and}\quad
\lambda_{\min}(X^\top X)\ge \frac{\kappa m}{4}
\right\}.
\]
By the tallness assumption in Corollary~\ref{cor:random-design}, we have $m\ge \Gamma n$ for all sufficiently large $m,n$, where $\Gamma$ is the universal constant from Lemma~\ref{lem:gaussian-random-design-conditioning}.
Therefore the lemma gives
\[
\PP_X(\mathcal E_{m,n}^c)\le 2\exp(-\xi m)\to0.
\]

To apply Theorem~\ref{thm:main} on the high-probability event $\mathcal E_{m,n}$, first define the deterministic lower bound
\[
\underline L_{\mathrm{noise},m,n}
:=
\max\left\{
\ell\in\mathbb N:
\frac{2^{3\ell/2}}{\ell^{3/2}}
\le
c_{\mathrm{noise}}\,\frac{\kappa m}{4\sigma^2 n^{3/2}}
\right\},
\]
and
\[
\underline L_{m,n}
:=
\min\{L_{\mathrm{Cast}},\underline L_{\mathrm{noise},m,n}\}.
\]
Because
\[
\frac{m}{\sigma^2 n^{3/2}}\to\infty
\]
and $L_{\mathrm{Cast}}\to\infty$, we have
\[
\underline L_{m,n}\to\infty.
\]
On $\mathcal E_{m,n}$,
\[
L(X)\ge \underline L_{m,n}.
\]

It follows that $L(X)\to\infty$ in $\PP_X$-probability.
Indeed, for any fixed $K\in\mathbb N$,
\[
\PP_X\{L(X)<K\}
\le
\PP_X(\mathcal E_{m,n}^c)
+
\mathbbm{1}\{\underline L_{m,n}<K\}
\longrightarrow 0,
\]
since $\PP_X(\mathcal E_{m,n}^c)\to0$ and $\underline L_{m,n}\to\infty$.

We also have $\varepsilon_n(L(X))\to0$ in $\PP_X$-probability.
The approximation term satisfies
\[
(b-a)^\alpha 2^{-\alpha L(X)}\to0
\]
in $\PP_X$-probability because $L(X)\to\infty$ in $\PP_X$-probability.
For the stochastic term, since $L(X)\le L_{\mathrm{Cast}}$,
\[
\sqrt{\frac{L(X)2^{L(X)}}{n}}
\le
\sqrt{\frac{L_{\mathrm{Cast}}2^{L_{\mathrm{Cast}}}}{n}}
\to0,
\]
where the last convergence follows from the definition of $L_{\mathrm{Cast}}$.
Hence
\[
\varepsilon_n(L(X))\to0
\]
in $\PP_X$-probability.

We now reduce the random-design statement to the fixed-design theorem.
For a realized design $X$, define
\[
a_{m,n}(X)
:=
\EE_{g_0}^X\!\left[
\Pi_{L(X),X}\!\left(
\|g-g_0\|_\infty>C_0\varepsilon_n(L(X))
\mid
\bY
\right)
\right],
\]
where $C_0>0$ is the constant from Theorem~\ref{thm:main}. 
We claim that
\[
\sup_{X\in\mathcal E_{m,n}} a_{m,n}(X)\to0.
\]
Indeed, if not, then there exist $\eta>0$, a subsequence $(m_j,n_j)$, and deterministic matrices $X_{m_j,n_j}\in\mathcal E_{m_j,n_j}$ such that
\[
a_{m_j,n_j}(X_{m_j,n_j})\ge \eta
\]
for all $j$. But along this deterministic sequence of designs, we have
\[
\operatorname{rank}(X_{m_j,n_j})=n_j
\]
and
\[
L(X_{m_j,n_j})\ge \underline L_{m_j,n_j}\to\infty.
\]
Therefore Theorem~\ref{thm:main}, applied to this deterministic design sequence, gives
\[
a_{m_j,n_j}(X_{m_j,n_j})\to0,
\]
a contradiction. Hence
\[
\sup_{X\in\mathcal E_{m,n}} a_{m,n}(X)\to0.
\]

Finally, since $0\le a_{m,n}(X)\le1$,
\[
\EE_X a_{m,n}(X)
\le
\sup_{X\in\mathcal E_{m,n}} a_{m,n}(X)
+
\PP_X(\mathcal E_{m,n}^c)
\to0.
\]
That is,
\[
\EE_X\EE_{g_0}^X\!\left[
\Pi_{L(X),X}\!\left(
\|g-g_0\|_\infty>C_0\varepsilon_n(L(X))
\mid
\bY
\right)
\right]
\to0.
\]
This proves Corollary~\ref{cor:random-design}.
\end{proof}
\end{document}